\documentclass[10pt]{amsart}
\usepackage{amssymb,mathrsfs}
\usepackage[T1]{fontenc}
\usepackage{lmodern}
\usepackage{color}

\def\bR {\mathbf{R}}
\def\bS {\mathbf{S}}

\def\bZ {\mathbf{Z}}

\def\cA {\mathcal{A}}
\def\cB {\mathcal{B}}
\def\cC {\mathcal{C}}
\def\cD {\mathcal{D}}

\def\cL {\mathcal{L}}
\def\cM {\mathcal{M}}
\def\cN {\mathcal{N}}

\def\cQ {\mathcal{Q}}
\def\cR {\mathcal{R}}
\def\cS {\mathcal{S}}
\def\cT {\mathcal{T}}

\def\a {{\alpha}}
\def\b {{\beta}}
\def\g {{\gamma}}

\def\de {{\delta}}
\def\eps {{\epsilon}}
\def\th {{\theta}}

\def\ka {{\kappa}}
\def\l {{\lambda}}

\def\si {{\sigma}}
\def\Si {{\Sigma}}

\def\om {{\omega}}

\def\d {{\partial}}
\def\grad {{\nabla}}
\def\Dlt {{\Delta}}

\def\la {\langle}
\def\ra {\rangle}
\def \La {\bigg\langle}
\def \Ra {\bigg\rangle}

\def\wto {{\rightharpoonup}}

\def\dd {\mathrm{d}}


\newcommand{\Div}{\operatorname{div}}

\newcommand{\Dom}{\operatorname{Dom}}

\newcommand{\Span}{\operatorname{Span}}

\newcommand{\Tr}{\operatorname{trace}}

\newcommand{\Ker}{\operatorname{Ker}}
\newcommand{\Img}{\operatorname{Im}}

\newcommand{\ba}{\begin{aligned}}
\newcommand{\ea}{\end{aligned}}

\newcommand{\be}{\begin{equation}}
\newcommand{\ee}{\end{equation}}
\newcommand{\lb}{\label}

\newtheorem{theorem}{Theorem}[section]

\newtheorem{lemma}[theorem]{Lemma}
\newtheorem{proposition}{Proposition}

\theoremstyle{definition}


\definecolor{violetto}{rgb}{0.8, 0.4, 0.8 }

\begin{document}

\title[Vlasov-Navier-Stokes Model for Aerosol Flows]{A Derivation of the Vlasov-Navier-Stokes Model for Aerosol Flows from Kinetic Theory}

\author[E. Bernard]{Etienne Bernard}
\address[E.B.]{IGN-LAREG, Universit\'e Paris Diderot, B\^atiment Lamarck A, 5 rue Thomas Mann, Case courrier 7071, 75205 Paris Cedex 13, France}
\email{esteve.bernard@gmail.com}

\author[L. Desvillettes]{Laurent Desvillettes}
\address[L.D.]{Universit\'e Paris Diderot, Sorbonne Paris Cit\'e, Institut de Math\'ematiques de Jussieu - Paris Rive Gauche, UMR CNRS 7586, 75013, Paris, France}
\email{desvillettes@math.univ-paris-diderot.fr}

\author[F. Golse]{Fran\c cois Golse}
\address[F.G.]{CMLS, Ecole polytechnique et CNRS, Universit\'e Paris-Saclay, 91128 Palaiseau Cedex, France}
\email{francois.golse@polytechnique.edu}

\author[V. Ricci]{Valeria Ricci}
\address[V.R.]{Dipartimento di Matematica e Informatica, Universit\`a degli Studi di Palermo, Via Archirafi 34, 90123 Palermo, Italy}
\email{valeria.ricci@unipa.it}

\begin{abstract}
This article proposes a derivation of the Vlasov-Navier-Stokes system for spray/aerosol flows. The distribution function of the dispersed phase is governed by a Vlasov-equation, while the velocity field of the propellant satisfies the Navier-Stokes
equations for incompressible fluids. The dynamics of the dispersed phase and of the propellant are coupled through the drag force exerted by the propellant on the dispersed phase. We present a formal derivation of this model from a multiphase 
Boltzmann system for a binary gaseous mixture, involving the droplets/dust particles in the dispersed phase as one species, and the gas molecules as the other species. Under suitable assumptions on the collision kernels, we prove that the 
sequences of solutions to the multiphase Boltzmann system converge to distributional solutions to the Vlasov-Navier-Stokes equation in some appropriate distinguished scaling limit. Specifically, we assume (a) that the mass ratio of the gas 
molecules to the dust particles/droplets is small, (b) that the thermal speed of the dust particles/droplets is much smaller than that of the gas molecules and (c) that the mass density of the gas and of the dispersed phase are of the same order of
magnitude. The class of kernels modelling the interaction between the dispersed phase and the gas includes, among others, elastic collisions and inelastic collisions of the type introduced in [F. Charles: in ``Proceedings of the 26th International 
Symposium on Rarefied Gas Dynamics'', AIP Conf. Proc. 1084, (2008), 409--414].
\end{abstract}

\keywords{Vlasov-Navier-Stokes system; Boltzmann equation; Hydrodynamic limit; Aerosols; Sprays; Gas mixture}

\subjclass{35Q20, 35B25, (82C40, 76T15, 76D05)}

\maketitle


\section{Introduction}


An aerosol (or a spray) is a complex fluid consisting of a \textit{dispersed phase}, for instance solid particles or liquid droplets, immersed in a gas, sometimes referred to as the \textit{propellant}.

An important class of models for the dynamics of aerosol/spray flows consists of

(a) a kinetic equation for the dispersed phase, and

(b) a fluid equation for the background gas.

The kinetic equation for the dispersed phase and the fluid equation for the back\-ground gas are coupled through the drag force exerted by the gas on the drop\-lets/par\-ti\-cles.

A well-known example of this class of models is the (incompressible) Vlasov-Navier-Stokes system:
$$
\left\{
\ba
{}&\d_tF+v\cdot\grad_xF-\frac{\ka}{m_p}\Div_v((v-u)F)=0\,,
\\
&\rho_g(\d_tu+u\cdot\grad_xu)+\grad_xp=\rho_g\nu\Dlt_xu+\ka\int_{\bR^3}(v-u)F\,\dd v\,,
\\
&\Div_xu=0\,.
\ea
\right.
$$
The unknowns in this system are $F\equiv F(t,x,v)\ge 0$, the distribution function of the dispersed phase, i.e. the number density of particles or droplets with velocity $v$ located at the position $x$ at time $t$, and $u\equiv u(t,x)\in\bR^3$,
the velocity field in the gas. The parameters $\ka$, $m_p$, $\rho_g$ and $\nu$ are positive constants. Specifically, $\ka$ is the friction coefficient of the gas on the dispersed phase, $m_p$ is the mass of a particle or droplet, and $\rho_g$ is 
the gas density, while $\nu$ is the kinematic viscosity of the gas. The last equation in the system above indicates that the gas flow is considered as incompressible\footnote{It is well known that the motion of a gas at a very low Mach number 
is governed by the equations of incompressible fluid mechanics, even though a gas is a compressible fluid. A formal justification for this fact can be found on pp. 11--12 in \cite{PLLFluidMech1}.}. The scalar pressure field $p\equiv p(t,x)\in\bR$ 
is instantaneously coupled to the unknowns $F$ and $u$ by the Poisson equation
$$
-\Dlt_xp=\rho_g\Tr((\grad_xu)^2)-\ka\Div_x\int_{\bR^3}(v-u)F\,\dd v\,.
$$
The mathematical theory of the Vlasov-Navier-Stokes system has been discussed in \cite{ChengYu}. Various asymptotic limits of the Vlasov-Stokes system that are of great interest in the modeling of aerosol or spray flows have been
investigated in the mathematical literature: see for instance \cite{GoudonJabinVasseur1, GoudonJabinVasseur2, LDM}.

Our aim in the present work is different: we are concerned in deriving models such as the Vlasov-Navier-Stokes system from a more microscopic description of aerosol flows.

Perhaps the most natural idea for doing so is to view the Vlasov-Navier-Stokes system as a mean field model governing the limit of the solid particles (or droplets) phase space empirical measure as the particle number tends to infinity 
and the particle radius vanishes in some appropriate distinguished scaling. 

Derivations of the Stokes and Navier-Stokes equation with {a force term including} the drag force exerted by the particles on the fluid (known as the Brinkman force) from a system consisting of a large number of particles immersed in a 
viscous fluid can be found in \cite{Allaire, DesvFGRicci08}. While these papers obtain the same Navier-Stokes equation as in the coupled system above (more precisely, its steady variant), they assume that the phase space distribution 
of particles or droplets is given, and therefore do not derive the full Vlasov-Navier-Stokes system. The reason for this shortcoming is the following: both references  \cite{Allaire, DesvFGRicci08} use the method of homogenization of elliptic 
operators with holes of finite capacity pioneered by Khruslov and his school --- see for instance \cite{Khruslov,CioraMurat}. Unfortunately, these methods assume that the minimal distance between particles remains uniformly much larger 
than the particle radius $r\ll 1$ --- specifically, of the order of $r^{1/3}$ in space dimension $3$. While this assumption can be imposed if the distribution of particles is given, such a control on the distance between neighboring particles is 
probably not nicely propagated by the particle dynamics and most likely hard to establish (see however \cite{JabinOtto} for interesting ideas in this direction). Even if such a control could be established, configurations of $N$ particles with 
such a uniform control on the minimal distance between neighboring particles are of vanishing probability in the large $N$ limit: see for instance Proposition 4 in \cite{Hauray}. Worse, the coupled dynamics of finitely many rigid spheres 
immersed in a Navier-Stokes flow may not be defined for all positive times: see \cite{DesjaEsteban, DGVHill}.

In view of all these difficulties, we have chosen another route to derive coupled systems such as the Vlasov-Navier-Stokes system from a more microscopic model. Specifically, we start from a coupled system of Boltzmann equations for the 
solid particles or droplets and for the gas molecules.

One might object that the Boltzmann equation is a first principle equation neither for the solid particles nor for the gas molecules. In addition, the idea to treat the gas molecules and the particles in the dispersed phase on equal footing is 
most unnatural. On the other hand, the system of Newton's equations written for each solid particle immersed in an incompressible Navier-Stokes fluid cannot be considered as a first principle model for aerosol flows either. Indeed, it is 
only in some very special asymptotic limit that the dynamics of a gas is governed by the incompressible Navier-Stokes equations.

On the other hand, using a system of Boltzmann equations for the dispersed phase and the gas allows considering distributions of solid particles or droplets without any constraint on the minimal distance between neighbouring particles. 
In fact collisions between particles in the dispersed phase are described by a collision integral, in the same way as collisions between gas molecules. 

Another benefit in this approach is the great variety of models describing the interaction between the dispersed phase and the propellant. In the present work, this interaction is described in terms of a general class of Boltzmann type collision 
integrals, assumed to satisfy a few assumptions discussed in section \ref{S-3} below. We have focussed our attention on two examples of such collision integrals; in one case, collision are assumed to be elastic, while the other example is
based on the diffuse reflection of gas molecules on the surface of dust particles or droplets, which is an inelastic process. 

For that reason, we believe that the idea of starting from the kinetic theory of multicomponent gases may provide an interesting alternative to the traditional arguments used in deriving the various dynamical models appearing in the theory 
of aerosol flows. 

This approach should not be confused with the more detailed analysis of rarefied gas flows past an immersed body (see for instance \cite{Takata,TaguchiJFM,TaguchiRGD}). It has been known for a long time that the motion of an immersed
body in a viscous fluid involves nonlocal effects in the time variable: see \cite{Boussinesq85} for a short, yet detailed presentation of the Boussinesq-Basset force. Similar effects can be observed in the case of a solid particle immersed in
a rarefied gas and have been recently studied: see \cite{AokiPulvi} and the references therein. Since our description of the interaction between the dispersed phase and the propellant is based on collision integrals, it does not include such
effects. On the other hand, our purpose is not to focus on the details of the interaction between a single dust particle or droplet with the propellant, but rather to investigate the collective behavior of the dispersed phase. Whether the system
of Boltzmann equations for a 2-component gas can be justified from a more detailed, microscopic model, such as the dynamics of a system of solid particles immersed in a rarefied gas, seems to be a very interesting problem, albeit a very
difficult one.

Our derivation of dynamical equations for aerosol flows from the kinetic theory of multicomponent gas is systematic yet formal, in the sense of the derivations of fluid dynamic equations from the Boltzmann equation in \cite{BGL1}.
The outline of this paper is the following: section \ref{S-2} introduces the system of Boltzmann equations used as the starting point in our derivation. In particular, the fundamental conservation laws and basic properties of this system
are recalled in section \ref{S-2}, along with the dimensionless form of the equations and the definition of the scaling parameters involved. Section \ref{S-3} studies in detail the main properties of the collision kernel describing the interaction
of gas molecules with dust particles or droplets. Our main result, i.e. is the derivation of the Vlasov-Navier-Stokes system, is stated as Theorem \ref{T-LimThm} in section \ref{S-4}. Its proof occupies most of section \ref{S-4}.


\section{Boltzmann Equations for Multicomponent Gases}\lb{S-2}


Consider a binary mixture consisting of microscopic gas molecules and much bigger solid dust particles or liquid droplets. For simplicity, we henceforth assume that the dust particles or droplets are identical (in particular, the spray is 
monodisperse: all particles have the same mass), and that the gas is monatomic. We denote from now on by $F\equiv F(t,x,v)\ge 0$ the distribution function of dust particles or droplets, and by $f\equiv f(t,x,w)\ge 0$ the distribution 
function of gas molecules. These distribution functions satisfy the system of Boltzmann equations
\be\lb{BoltzSys0}
\ba
(\d_t+v\cdot\grad_x)F&=\cD(F,f)+\cB(F)\,,
\\
(\d_t+w\cdot\grad_x)f&=\cR(f,F)+\cC(f)\,.
\ea
\ee
The terms $\cB(F)$ and $\cC(f)$ are the Boltzmann collision integrals for pairs of dust particles or liquid droplets and for pairs of gas molecules respectively. The terms $\cD(F,f)$ and $\cR(f,F)$ are Boltzmann type collision integrals 
describing the deflection of dust particles or liquid droplets subject to the impingement of gas molecules, and the slowing down of gas molecules by collisions with dust particles or liquid droplets respectively.

\subsection{Fundamental conservation laws for multicomponent Boltzmann systems}


Before describing in detail the collision integrals introduced above, we recall their fundamental properties.

Collisions between gas molecules are assumed to be elastic, so that the Boltzmann collision integral $\cC(f)$ satisfies the following local conservation laws of mass, momentum and energy: for each measurable $f$ defined a.e. on $\bR^3$ 
and rapidly decaying as $|w|\to\infty$,
\be\label{star}
\int_{\bR^3}\cC(f)(w)\left(\begin{matrix} 1\\ w\\ |w|^2\end{matrix}\right)\,\dd w=0\,.
\ee

Collisions between dust particles or liquid droplets may not be perfectly elastic, so that the Boltzmann collision integral $\cB(F)$ satisfies only the local conservation laws of mass and momentum: for each measurable $F$ defined a.e. on 
$\bR^3$ and rapidly decaying as $|v|\to\infty$,
\be\label{deuxpr}
\int_{\bR^3}\cB(F)(v)\left(\begin{matrix} 1\\ v\end{matrix}\right)\,\dd v=0\,.
\ee

Collisions between gas molecules and dust particles or liquid droplets obviously preserve the nature of the colliding objects. Therefore, the collision integrals $\cD(F,f)$ and $\cR(f,F)$ satisfy the following local conservation of particle number 
per species: for each measurable $F$ and $f$ defined a.e. on $\bR^3$ and rapidly decaying at infinity, 
\be\label{starstar}
\int_{\bR^3}\cD(F,f)(v)\,\dd v=\int_{\bR^3}\cR(f,F)(w)\,\dd w=0\,.
\ee
These collision integrals satisfy the local balance of momentum in the aerosol, i.e.
\be\label{troispr}
m_p\int_{\bR^3}\cD(F,f)(v)v\,\dd v+m_g\int_{\bR^3}\cR(f,F)(w)w\,\dd w=0\,,
\ee
where $m_g$ is the mass of gas molecules and $m_p$ the mass of dust particles or liquid droplets. 

If the collisions between gas molecules and droplets or dust particles are elastic, these collision integrals satisfy in addition the local balance of energy in the aerosol, i.e.
$$
m_p\int_{\bR^3}\cD(F,f)(v)\tfrac12|v|^2\,\dd v+m_g\int_{\bR^3}\cR(f,F)(w)\tfrac12|w|^2\,\dd w=0\,.
$$

\subsection{Dimensionless Boltzmann systems}


We assume for simplicity that the aerosol is enclosed in a periodic box of size $L>0$, i.e. $x\in\bR^3/L\bZ^3$. The system of Boltzmann equations (\ref{BoltzSys0}) involves an important number of physical parameters, which are listed in the 
table below.

\bigskip
\begin{center}
\begin{tabular}{|c|c|}
\hline
\hspace{.2cm} Parameter \hspace{.2cm} & \hspace{.2cm} Definition \hspace{.2cm}\\
\hline
\hline
\hspace{.2cm} $L$ \hspace{.2cm} & \hspace{.2cm} size of the container (periodic box) \hspace{.2cm}\\
\hline
\hspace{.2cm} $\cN_p$ \hspace{.2cm} & \hspace{.2cm} number of particles$/L^3$ \hspace{.2cm}\\
\hline
\hspace{.2cm} $\cN_g$ \hspace{.2cm} & \hspace{.2cm} number of gas molecules$/L^3$ \hspace{.2cm}\\
\hline
\hspace{.2cm} $V_p$ \hspace{.2cm} & \hspace{.2cm} thermal speed of particles \hspace{.2cm}\\
\hline
\hspace{.2cm} $V_g$ \hspace{.2cm} & \hspace{.2cm} thermal speed of gas molecules \hspace{.2cm}\\
\hline
\hspace{.2cm} $S_{pp}$ \hspace{.2cm} & \hspace{.2cm} average particle/particle cross-section \hspace{.2cm}\\
\hline
\hspace{.2cm} $S_{pg}$ \hspace{.2cm} & \hspace{.2cm} average particle/gas cross-section \hspace{.2cm}\\
\hline
\hspace{.2cm} $S_{gg}$ \hspace{.2cm} & \hspace{.2cm} average molecular cross-section \hspace{.2cm}\\
\hline
\hspace{.2cm} $\eta=m_g/m_p$ \hspace{.2cm} & \hspace{.2cm} mass ratio (molecules/particles) \hspace{.2cm}\\
\hline
\hspace{.2cm} $\eps=V_p/V_g$ \hspace{.2cm} & \hspace{.2cm} thermal speed ratio (particles/molecules) \hspace{.2cm}\\
\hline
\end{tabular}

\bigskip
\centerline{Table 1: the physical parameters for binary gas mixtures}
\end{center}

\bigskip
We first define a dimensionless position variable:
$$
\hat x:=x/L\,,
$$
together with dimensionless velocity variables for each species:
$$
\hat v:=v/V_p\,,\quad \hat w:=w/V_g\,.
$$
In other words, the velocity of each species is measured in terms of the thermal speed of the particles in the species under consideration. 

Next, we define a time variable, which is adapted to the individual motion of the typical particle of the slowest species, i.e. the dust particles or droplets:
$$
\hat t:=tV_p/L\,.
$$

Finally, we define dimensionless distribution functions for each particle species:
$$
\hat F(\hat t,\hat x,\hat v):=V^3_pF(t,x,v)/\cN_p\,,\qquad\hat f(\hat t,\hat x,\hat w):=V^3_gf(t,x,w)/\cN_g\,.
$$

The definition of dimensionless collision integrals is more complex and involves the average collision cross-sections $S_{pp},S_{pg},S_{gg}$, whose definition is recalled below. 

The collision integrals $\cB(F)$, $\cC(f)$, $\cD(F,f)$ and $\cR(f,F)$ are given by expressions of the form
\be
\label{Colli}
\ba
\cB(F)(v)=&\iint_{\bR^3\times\bR^3}F(v')F(v'_*)\Pi_{pp}(v,\dd v'\,\dd v'_*)
\\
&-F(v)\int_{\bR^3}F(v_*)|v-v_*|\Si_{pp}(|v-v_*|)\,\dd v_*\,,
\\
\cC(f)(w)=&\iint_{\bR^3\times\bR^3}f(w')f(w'_*)\Pi_{gg}(w,\dd w'\,\dd w'_*)
\\
&-f(w)\int_{\bR^3}f(w_*)|w-w_*|\Si_{gg}(|w-w_*|)\,\dd w_*\,,
\\
\cD(F,f)(v)=&\iint_{\bR^3\times\bR^3}F(v')f(w')\Pi_{pg}(v,\dd v'\,\dd w')
\\
&-F(v)\int_{\bR^3}f(w)|v-w|\Si_{pg}(|v-w|)\,\dd w\,,
\\
\cR(f,F)(w)=&\iint_{\bR^3\times\bR^3}F(v')f(w')\Pi_{gp}(w,\dd v'\,\dd w')
\\
&-f(w)\int_{\bR^3}F(v)|v-w|\Si_{pg}(|v-w|)\,\dd v\,.
\ea
\ee
In these expressions, $\Pi_{pp},\Pi_{gg},\Pi_{pg},\Pi_{gp}$ are nonnegative, measure-valued measurable functions defined a.e. on $\bR^3$, while $\Si_{pp},\Si_{gg},\Si_{pg}$ are nonnegative measurable functions defined a.e. on $\bR_+$. 

The quantities $\Pi$ and $\Si$ are related by the following identities:
\be\label{Colli2}
\ba 
\int_{\bR^3_v}\dd v\,\Pi_{pp}(v,\dd v'\,\dd v'_*) &=  |v'-v'_*|\Si_{pp}(|v'-v'_*|)\,\dd v'\,\dd v'_*\,,
\\
\int_{\bR^3_w} \dd w\, \Pi_{gg}(w,\dd w'\,\dd w'_*) &= |w'-w'_*|\Si_{gg}(|w'-w'_*|)\,\dd w'\,\dd w'_*\,,
\\
\int_{\bR^3_v} \dd v\,\Pi_{pg}(v,\dd v'\,\dd w') &= |v'-w'|\Si_{pg}(|v'-w'|) \,\dd v'\,\dd w'\,,
\\
\int_{\bR^3_w}  \, \dd w\,\Pi_{gp}(w,\dd v'\,\dd w') &= |v'-w'|\Si_{pg}(|v'-w'|) \,\dd v'\,\dd w'\,.
\ea
\ee
In each one of these identities, the left hand side is to be understood as an integral with respect to the unprimed variable ($v$ for the 1st and 3rd identities, $w$ for the 2nd and the 4th) of a nonnegative measurable function with values in
the set of Borel measures on $\bR^3\times\bR^3$. The identities (\ref{Colli2}) imply the conservation of mass for each species of particles for all the collision integrals appearing in  (\ref{Colli}). These conservation laws have been stated
above: see the first lines of (\ref{star}) and (\ref{deuxpr}), and (\ref{starstar}).

We refer to formula (3.6) in \cite{Landau10} for this general presentation of collision integrals. Specific examples of these collision integrals will be discussed in section \ref{SS-ExplctCollInt} below

According to formula (2.2) in \cite{Landau10}, $\Si_{pp},\Si_{gg}$ and $\Si_{pg}$ have the dimensions of areas. The corresponding dimensionless quantities are 
$$
\ba
\hat\Si_{pp}(|\hat v|)&=\Si_{pp}(V_p|\hat v|)/S_{pp}\,,
\\
\hat\Si_{gg}(|\hat w|)&=\Si_{gg}(V_g|\hat w|)/S_{gg}\,,
\\
\hat\Si_{pg}(|\hat z|)&=\Si_{pg}(V_g|\hat z|)/S_{pg}\,.
\ea
$$
Likewise, we define the dimensionless measure-valued collision kernels by the formulas
$$
\ba
\hat\Pi_{pp}(\hat v,\dd\hat v'\,\dd\hat v'_*)&=\Pi_{pp}(v,\dd v'\,\dd v'_*)/S_{pp}V_p^4\,,
\\
\hat\Pi_{gg}(\hat w,\dd\hat w'\,\dd\hat w'_*)&=\Pi_{gg}(w,\dd w'\,\dd w'_*)/S_{gg}V_g^4\,,
\\
\hat\Pi_{pg}(\hat v,\dd\hat v'\,\dd\hat w')&=\Pi_{pg}(v,\dd v'\,\dd w')/S_{pg}V_g^4\,,
\\
\hat\Pi_{gp}(\hat w,\dd\hat v'\,\dd\hat w')&=\Pi_{gp}(w,\dd v'\,\dd w')/S_{pg}V_gV_p^3\,.
\ea
$$

We henceforth define the dimensionless collision integrals as follows:
$$
\ba
\hat\cB(\hat F)(\hat v)=&\iint_{\bR^3\times\bR^3}\hat F(\hat v')\hat F(\hat v'_*)\hat \Pi_{pp}(\hat v,\dd \hat v'\,\dd \hat v'_*)
\\ \label{dimc}
&-\hat F(\hat v)\int_{\bR^3}\hat F(\hat v_*)|\hat v-\hat v_*|\hat \Si_{pp}(|\hat v-\hat v_*|)\,\dd \hat v_*\,,
\\ \label{dimd}
\hat \cC(\hat f)(\hat w)=&\iint_{\bR^3\times\bR^3}\hat f(\hat w')\hat f(\hat w'_*)\hat \Pi_{gg}(\hat w,\dd \hat w'\,\dd \hat w'_*)
\\ \label{dimr}
&-\hat f(\hat w)\int_{\bR^3}\hat f(\hat w_*)|\hat w-\hat w_*|\hat \Si_{gg}(|\hat w-\hat w_*|)\,\dd \hat w_*\,,
\ea
$$
while
$$
\ba
\hat \cD(\hat F,\hat f)(\hat v)=&\iint_{\bR^3\times\bR^3}\hat F(\hat v')f(\hat w')\hat \Pi_{pg}(\hat v,\dd \hat v'\,\dd \hat w')
\\
&-\hat F(\hat v)\int_{\bR^3}\hat f(\hat w)\left|\tfrac{V_p}{V_g}\hat v-\hat w\right|\hat \Si_{pg}\left(\left|\tfrac{V_p}{V_g}\hat v-\hat w\right|\right)\,\dd \hat w\,,
\\
\hat \cR(\hat f,\hat F)(\hat w)=&\iint_{\bR^3\times\bR^3}\hat F(\hat v')\hat f(\hat w')\hat \Pi_{gp}(\hat w,\dd \hat v'\,\dd \hat w')
\\
&-\hat f(\hat w)\int_{\bR^3}\hat F(\hat v)\left|\tfrac{V_p}{V_g}\hat v-\hat w\right|\hat \Si_{pg}\left(\left|\tfrac{V_p}{V_g}\hat v-\hat w\right|\right)\,\dd \hat v\,.
\ea
$$

With the dimensionless quantities so defined, we arrive at the following dimensionless form of the multicomponent Boltzmann system:
\be\lb{BoltzSys}
\left\{
\ba
{}&\d_{\hat t}\hat F\,+\,\hat v\cdot\grad_{\hat x}\hat F\,=\cN_gS_{pg}L\frac{V_g}{V_p}\hat\cD(\hat F,\hat f)+\cN_pS_{pp}L\hat\cB(\hat F)\,,
\\
&\d_{\hat t}\hat f\!+\!\frac{V_g}{V_p}\hat w\!\cdot\!\grad_{\hat x}\hat f=\cN_pS_{pg}L\frac{V_g}{V_p}\hat\cR(\hat f,\hat F)+\cN_gS_{gg}L\frac{V_g}{V_p}\hat\cC(\hat f)\,.
\ea
\right.
\ee

Throughout the present study, we shall always assume that 
\be\lb{NoppColl}
\cN_pS_{pp}L\ll 1\,.
\ee
In other words, the collision integral for dust particles or droplets $\cN_pS_{pp}L\hat\cB(\hat F)$ is considered as formally negligible, and will be henceforth systematically discarded in the equations.

Besides, the thermal speed $V_p$ of dust particles or droplets is in general smaller than the thermal speed $V_g$ of gas molecules; thus we denote their ratio by
\be\lb{Def-eps}
\eps=\frac{V_p}{V_g}\in[0,1]\,.
\ee

Recalling that the mass ratio $[0,1]\ni \eta = m_g/m_p$ is supposed to be extremely small, since the particles are usually much bigger than the molecules, we also assume 
\be\lb{Def-eta}
\eta=\frac{\cN_p}{\cN_g}\in[0,1]\,.
\ee
This assumption on the ratio of the number of particles to the number of molecules defines a scaling such that the mass density of the gas is of the same order of magnitude as the mass density of droplets.

Finally, we shall assume that 
\be\lb{Scal-pg-gg}
\cN_p\,S_{pg}\, L = \eps\,,\quad\hbox{ and }\quad\cN_g\,S_{gg}\, L = 1/\eps\,.
\ee

Under these assumptions, 
$$
\ba
\cN_gS_{pg}L\frac{V_g}{V_p}\!=\!\frac{\cN_g}{\cN_p}(\cN_pS_{pg}L)\frac{V_g}{V_p}\!=\!\frac1\eta\,,
\\	\\
(\cN_pS_{pg}L)\frac{V_g}{V_p}\,=\,(\cN_gS_{gg}L)\frac{V_g}{V_p}=1\,,
\ea
$$
so that we arrive at the scaled system
\be\lb{BoltzSysSc}
\left\{
\ba
{}&\d_{\hat t}\hat F\,+\,\hat v\cdot\grad_{\hat x}\hat F\,=\frac{1}{\eta}\hat\cD(\hat F,\hat f)\,,
\\
&\d_{\hat t}\hat f+\frac{1}{\eps}\hat w\cdot\grad_{\hat x}\hat f=\hat\cR(\hat f,\hat F)+\frac{1}{\eps^2}\hat\cC(\hat f)\,.
\ea
\right.
\ee

Henceforth, we drop hats on all dimensionless quantities and variables introduced in this section. Only dimensionless variables, distribution functions and collision integrals will be considered from now on.  We also use $V,W$ as dummy
variables in the gain part of the collision operators $\cD$ and $\cR$, in order to avoid confusion.

We define therefore the ($\eps$- and $\eta$-dependent) dimensionless collision integrals
\be\lb{newc}
\ba 
\cC(f)( w)=&\iint_{\bR^3\times\bR^3}f(w') f(w'_*) \Pi_{gg}(w,\dd w'\,\dd w'_*)
\\ 
&- f(w)\int_{\bR^3} f(w_*)| w- w_*|  \Si_{gg}(|w- w_*|)\,\dd w_*\,,
\ea 
\ee 
\be\lb{newd}
\ba 
\cD( F, f)( v)=&\iint_{\bR^3\times\bR^3} F(V)f(W)\Pi_{pg}( v,\dd V\,\dd W)
\\
&- F(v)\int_{\bR^3} f(w)\left|\eps v- w\right|\Si_{pg}\left(\left|\eps v- w\right|\right)\,\dd w\,,
\ea
\ee 
\be\lb{newr}
\ba 
 \cR( f, F)( w)=&\iint_{\bR^3\times\bR^3} F(V) f(W) \Pi_{gp}( w,\dd V\,\dd W)
\\
&- f( w)\int_{\bR^3} F(v)\left|\eps v- w\right| \Si_{pg}\left(\left|\eps v-w\right|\right)\,\dd  v\,,
\ea
\ee 
with $\Si_{gg}$, $\Si_{pg}$ satisfying (\ref{Colli2}). Notice that the scattering kernels $\Pi_{pg}$ and $\Pi_{gp}$ depend in fact on $\eps$ and $\eta$. Whenever necessary (for instance in describing the asymptotic behavior of these kernels
in the small $\eps$ and $\eta$ limit), we shall denote them $\Pi_{pg}^{\eps,\eta}$ and $\Pi_{gp}^{\eps,\eta}$ respectively.  

The scaled Boltzmann system (\ref{BoltzSysSc}) is then recast as 
\be\lb{BoltzSysSc2}
\left\{
\ba
{}&\d_t F\,+\,v\cdot\grad_x F\,=\frac{1}{\eta}\cD(F,f)\,,
\\
&\d_t f+\frac{1}{\eps}w\cdot\grad_x f=\cR(f, F)+\frac{1}{\eps^2}\cC(f)\,.
\ea
\right.
\ee

\subsection{Explicit formulas for the collision integrals}\lb{SS-ExplctCollInt}


In the previous section, we have introduced a general setting for the various collisional processes involved in gas-particle mixtures. The explicit formulas for the main examples of collision integrals considered in this work are given
in the next three sections.

\subsubsection{The Boltzmann collision integral for gas molecules}


The dimensionless collision integral $\cC(f)$ is given by the formula
\be\label{cc1}
\cC(f)(w)=\iint_{\bR^3\times\bS^2}(f(w')f(w'_*)-f(w)f(w_*))c(w-w_*,\om)\,\dd w_*\dd \om ,
\ee
for each measurable $f$ defined a.e. on $\bR^3$ and rapidly decaying at infinity, where 
\be\label{cc2}
\ba
w'\equiv\,w'(w,w_*,\om):=w\,-(w-w_*)\cdot\om\om\,,
\\
w'_*\equiv\!w'_*(w,w_*,\om):=w_*\!+(w-w_*)\cdot\om\om\,,
\ea
\ee
(see formulas (3.11) and (4.16) in chapter II of \cite{Cerci75}).
 The collision kernel $c$ is of the form
\be\label{cc3}
c(w-w_*,\om)=|w-w_*|\si_{gg}(|w-w_*|,|\cos(\widehat{w-w_*,\om})|),
\ee
where $\si_{gg}$ is the dimensionless differential cross-section of gas molecules. In other words, 
$$
\Si_{gg}(|z|)=4\pi\int_0^1\si_{gg}(|z|,\mu)\,\dd\mu\,,
$$
while
\be
\label{Pigg}
\Pi_{gg}(w,\cdot)=\iint_{\bR^3\times\bS^2}\dd w_*\dd \om \,\de_{w'(w,w_*,\om)}\otimes\de_{w'_ *(w,w_*,\om)}c(w-w_*,\om)
\ee
The left hand side is to be understood a function of $w$ with values in the set of positive Borel measures on $\bR^3\times\bR^3$, while the right hand side is a linear superposition of the positive Borel measures 
$\de_{w'(w,w_*,\om)}\otimes\de_{w'(w,w_*,\om)}$ on $\bR^3\times\bR^3$ obtained by integrating over $w_*,\om$ while $w$ is kept fixed. 

If more to one's taste, one can equivalently formulate this equality by applying both sides to a test function $\chi\in C_c(\bR^3\times\bR^3)$: 
$$ 
\ba 
\iint_{\bR^3\times\bR^3}\chi(W,W_*)\Pi_{gg}(w,\dd W\dd W_*)
\\
=\iint_{\bR^3\times\bS^2}\la\de_{w'(w,w_*,\om)}\otimes\de_{w'_*(w,w_*,\om)},\chi\ra c(w-w_*,\om)\,\dd w_*\dd\om& 
\\ 
=\iint_{\bR^3\times\bS^2}\chi(w'(w,w_*,\om),w'_*(w,w_*,\om))c(w-w_*,\om)\,\dd w_*\dd\om&\,. 
\ea 
$$ 
One recognizes in the last right hand side of the equalities above the usual expression for the gain term in the Boltzmann collision integral for identical particles interacting by elastic collisions.

We recall that the collision integal $\cC$ satisfies the conservation of mass, momentum and kinetic energy (\ref{star}) --- see formulas (1.16)-(1.18) in chapter II of \cite{Cerci69}.

We assume that the molecular interaction is defined in terms of a hard potential satisfying Grad's cutoff assumption. In other words, we assume that there exists $c_*>1$ and $\g\in[0,1]$ such that
\be\label{cc4}
\ba
{}&0\,<\,c(z,\om)\,\le\, c_*(1+|z|)^\g\,,&&\quad\hbox{ for a.e. }(z,\om)\in\bR^3\times\bS^2\,,
\\
&\int_{\bS^2}c(z,\om)\,\dd\om\ge\frac1{c_*}\frac{|z|}{1+|z|}\,,&&\quad\hbox{ for a.e. }z\in\bR^3\,.
\ea
\ee

Next we discuss the properties of the linearization about a Maxwellian equilibrium state of the collision integral $\cC$. By scaling and Galilean invariance, one can consider the Maxwellian distribution
\be\lb{maxw}
M(w):=\tfrac1{(2\pi)^{3/2}}e^{-|w|^2/2}
\ee
without loss of generality. The linearized collision integral is defined as
\be\label{defL}
\cL\phi:=-M^{-1}D\cC(M)\cdot(M\phi)\,,
\ee
where $D$ is the functional derivative.

The following result is a theorem of Hilbert in the case of hard sphere collisions, extended by Grad to the case of hard cutoff potentials (see \cite{Cerci75}, especially Theorem I on p.186 and Theorem II on p.187).

\begin{theorem}\lb{T-HilbGr}
The linearized collision integral $\cL$ is an unbounded operator on $L^2(M\dd v)$ with domain $\Dom\cL=L^2((\bar c\star M)^2 M\dd v)$, where 
$$
\bar c(z):=\int_{\bS^2}c(z,\om)\dd\om\,.
$$
Moreover, $\cL=\cL^*\ge 0$, with nullspace
\be\label{null}
\Ker\cL=\Span\{1,w_1,w_2,w_3,|w|^2\}.
\ee
Finally, $\cL$ is a Fredholm operator, so that 
$$
\Img\cL=\Ker\cL^\bot\,.
$$
\end{theorem}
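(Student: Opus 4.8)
The plan is to follow the classical route of Hilbert and Grad: compute $\cL$ explicitly, split it into a multiplication operator plus a compact remainder, and then read off every assertion from this decomposition together with the symmetry of the collision form. Writing $\cC(f)=Q(f,f)$ for the symmetric bilinear form $Q$ associated with (\ref{cc1}), one has $D\cC(M)\cdot(M\phi)=2Q(M,M\phi)$, and since the elastic relations (\ref{cc2}) conserve energy we have the fundamental identity $M(w')M(w'_*)=M(w)M(w_*)$. Substituting this into $2Q(M,M\phi)$ and dividing by $-M(w)$ as in (\ref{defL}) collapses the four-term expression to
\be
\cL\phi(w)=\iint_{\bR^3\times\bS^2}M(w_*)\big(\phi(w)+\phi(w_*)-\phi(w')-\phi(w'_*)\big)\,c(w-w_*,\om)\,\dd w_*\,\dd\om\,.
\ee
Isolating the $\phi(w)$ term yields the decomposition $\cL=M_\nu-K$, where $M_\nu$ is multiplication by the collision frequency $\nu(w):=(\bar c\star M)(w)$ and $K$ gathers the three remaining terms.

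Next I would establish two-sided pointwise bounds on $\nu$. Convolving the two inequalities of (\ref{cc4}) with $M$ gives $\nu_0\le\nu(w)\le\nu_1(1+|w|)^\g$ with $\nu_0>0$: the upper bound is immediate from the growth hypothesis, while the lower bound uses that $\tfrac{|w-w_*|}{1+|w-w_*|}$ stays bounded away from $0$ on a set of positive Gaussian mass for every fixed $w$. Since $\nu\phi\in L^2(M\,\dd v)$ is equivalent to $\phi\in L^2(\nu^2M\,\dd v)=L^2((\bar c\star M)^2M\,\dd v)$, this identifies $\Dom(M_\nu)$, and the growth of $\nu$ for $\g>0$ shows $\cL$ is unbounded. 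The domain claim for $\cL$ itself then follows once $K$ is known to be bounded on $L^2(M\,\dd v)$, because $\nu\phi=\cL\phi+K\phi$ forces $\Dom\cL=\Dom(M_\nu)$.

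The main technical step is \emph{Grad's lemma}: the operator $K$ is compact on $L^2(M\,\dd v)$. I would prove this by conjugating with $M^{1/2}$ to pass to the unit-weight space, rewriting $K$ in Carleman form as an integral operator with an explicit kernel, and showing that under the hard-cutoff bounds (\ref{cc4}) this kernel is Hilbert--Schmidt, or a uniform operator-norm limit of Hilbert--Schmidt kernels obtained by truncating the relative velocity. This is where essentially all the work lies and is the principal obstacle, since the kernel is singular and its square-integrability must be extracted from the precise $\g$-dependent growth allowed in (\ref{cc4}).

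With the decomposition established, the remaining assertions are soft. Symmetrizing the form $\la\cL\phi,\psi\ra_{L^2(M\,\dd v)}$ via the two standard collisional changes of variables $w\leftrightarrow w_*$ and $(w,w_*)\mapsto(w',w'_*)$ produces the manifestly symmetric expression
\be
\la\cL\phi,\psi\ra_{L^2(M\,\dd v)}=\tfrac14\iiint M(w)M(w_*)(\phi+\phi_*-\phi'-\phi'_*)(\psi+\psi_*-\psi'-\psi'_*)\,c\,\dd\om\,\dd w_*\,\dd w\,,
\ee
so that $\cL=\cL^*$ at the form level and $\cL\ge0$ (take $\psi=\phi$) are immediate; self-adjointness as an unbounded operator then follows because $-K$ is a bounded symmetric perturbation of the self-adjoint operator $M_\nu$ on the common domain $L^2((\bar c\star M)^2M\,\dd v)$. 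Setting $\psi=\phi$ shows $\la\cL\phi,\phi\ra=0$ iff $\phi+\phi_*-\phi'-\phi'_*=0$ for a.e.\ collision configuration (using $c>0$ a.e.\ from (\ref{cc4})), and the classical characterization of collisional invariants gives $\Ker\cL=\Span\{1,w_1,w_2,w_3,|w|^2\}$, which is (\ref{null}). Finally, since $K$ is $M_\nu$-compact, Weyl's theorem yields $\sigma_{\mathrm{ess}}(\cL)=\sigma_{\mathrm{ess}}(M_\nu)\subset[\nu_0,\infty)$, so $0$ is bounded away from the essential spectrum; being self-adjoint with $0$ an isolated eigenvalue of finite multiplicity, $\cL$ is Fredholm with closed range, whence $\Img\cL=\Ker\cL^\bot$.
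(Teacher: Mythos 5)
The paper does not prove this theorem: it invokes it as a classical result of Hilbert (hard spheres) and Grad (cutoff hard potentials), citing Theorems I and II on pp.~186--187 of \cite{Cerci75}. Your sketch is precisely the proof that citation points to --- the decomposition $\cL\phi=(\bar c\star M)\,\phi-K\phi$ with $K$ compact (Grad's lemma), the symmetrized Dirichlet form for $\cL=\cL^*\ge0$ and the collision-invariant characterization of $\Ker\cL$, and relative compactness plus Weyl's theorem for the Fredholm property --- and the logical structure is correct, with the compactness of $K$ rightly identified as the one step carrying real technical weight. Two small caveats: first, the hypotheses (\ref{cc4}) only bound $\bar c$ \emph{below} by the bounded function $|z|/(c_*(1+|z|))$, so they do not by themselves force $\bar c\star M$ to grow at infinity; for cutoff Maxwell molecules ($\g=0$) the operator is in fact bounded, so the ``unbounded'' clause implicitly assumes a coercive lower bound of the form $\bar c(z)\gtrsim|z|^\g$ with $\g>0$, as holds for hard spheres. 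Second, your $\nu$ for the collision frequency collides with the paper's $\nu$ for the viscosity in (\ref{nuka}); choose another letter if this text is to sit alongside the paper's.
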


Defining by 
\be\label{defA}
A(w):=w\otimes w-\tfrac13|w|^2I
\ee
the traceless component of the tensor $w\otimes w$, we see that $A\bot\Ker\cL$ in $L^2(M\dd v)$. Since $\cL$ satisfies the Fredholm alternative, there exists a unique $\tilde A\in\Dom\cL$ such that 
\be\label{defAtilde}
\cL\tilde A=A,\quad \quad\tilde A\bot\Ker\cL\,.
\ee
Using the symmetry properties of the collision integral and the rotation invariance of the Maxwellian distribution (\ref{maxw}), one can show that the matrix field $\tilde A$ is of the form 
\be\label{defalpha}
\tilde A(w)=\a(|w|)A(w)\,,
\ee
where $\a$ is a measurable function such that 
$$
\int_{\bR^3}\a(|w|)^2|w|^4(\bar c\star M(w))^2 M(w)\dd w<\infty\,.
$$
See \cite{dego} for a complete proof of this statement. 

In the sequel, we shall assume for simplicity that the molecular interaction is such that
$$
\a\in L^\infty(\bR_+)\,.
$$
It is a well known fact that, in the case of Mawxell molecules, that is, in the case where the collision kernel is of the form
$$
c(z,\om)=C(|\cos(\widehat{v-v_*,\om})|)\,,
$$ 
then $\a$ is a positive constant. (See for instance the discussion between formulas (3.15) and (3.17) in chapter V of \cite{Cerci69}.)

\subsubsection{The collision integrals $\cD$ and $\cR$ for elastic collisions}\label{sec232}


For each measurable $F$ and $f$ defined a.e. on $\bR^3$ and rapidly decaying at infinity, the dimensionless collision integrals $\cD(F,f)$ and $\cR(f,F)$ are given by the formulas
$$
\ba
\cD(F,f)(v)&=\iint_{\bR^3\times\bS^2}(F(v'')f(w'')\!-\!F(v)f(w))b(\eps v-w,\om)\,\dd w\dd\om\,,
\\
\cR(f,F)(w)&=\iint_{\bR^3\times\bS^2}(f(w'')F(v'')\!-\!f(w)F(v))b(\eps v-w,\om)\,\dd v\dd\om\,,
\ea
$$
where
\be\label{ela1}
\ba
{}&v''\equiv v''(v,w,\om)\,:=v\,-\frac{2\eta}{1+\eta}\!\left(v-\!\frac1\eps w\!\right)\!\cdot\om\om\,,
\\
&w''\!\equiv w''(v,w,\om)\!:=w-\frac{2}{1+\eta}\,\,(\,w-\eps v)\cdot\om\om\,,
\ea
\ee 
(see formula (5.10) in chapter II of \cite{Cerci75}). The collision kernel $b$ is of the form
\be\label{ela2}
b(\eps v-w,\om)=|\eps v-w|\si_{pg}(|\eps v-w|,|\cos(\widehat{\eps v-w,\om})|),
\ee
where $\si_{pg}$ is the dimensionless differential cross-section of gas molecules. In other words, 
\be\label{ela3}
\Si_{pg}(|z|)=4\pi\int_0^1\si_{pg}(|z|,\mu)\,\dd\mu\,,
\ee
while
\be\label{ela4}
\ba
\Pi_{pg}(v,\cdot)&=\iint_{\bR^3\times\bS^2}\dd w\,\dd\om\,b(\eps v-w,\om)\de_{v''(v,w,\om)}\otimes\de_{w''(v,w,\om)}\,,
\\
\Pi_{gp}(w,\cdot)&=\iint_{\bR^3\times\bS^2}\dd v\,\dd\om\,b(\eps v-w,\om)\de_{v''(v,w,\om)}\otimes\de_{w''(v,w,\om)}\,,
\ea
\ee
where the equalities (\ref{ela4}) are to be understood in the same way as (\ref{Pigg}).

One should keep in mind that the velocity of each species is measured in units of the thermal speed of that species. This accounts for the appearance of the thermal speed ratio $\eps$ in the formulas above. Moreover, the reduced mass of 
the dust particles or droplets and gas molecules  defined by formula (5.2) in chapter II of \cite{Cerci75} is
$$
\frac{m_pm_g}{m_p+m_g}=\frac{m_g}{1+\eta}=\frac{m_p\eta}{1+\eta}\,.
$$
These formulas explain how the mass ratio $\eta$ appears in the definition of $v''$ and $w''$ above.

We recall that the operators $\cD$ and $\cR$ defined in this subsection satisfy separately the conservation of the number of particles and molecules (\ref{starstar}), and jointly the conservation of momentum (involving both operators):
\be\label{momDR}
\eps \int_{\bR^3}\cD(F,f)(v)v\,\dd v+ \eta \int_{\bR^3}\cR(f,F)(w)w\,\dd w=0\,.
\ee
This last identity is a dimensionless version of (\ref{troispr}).

These properties can be easily checked using the formulas
\be\label{symcol}
\eps v'' + \eta w''= \eps v + \eta w\,,\quad\eps v'' - w''= R_{\omega} (\eps v - w), 
\ee
where $R_{\omega}$ is the reflection defined by $R_{\omega} w = w - 2 (w \cdot\omega) \omega$ for each $\om\in\bS^2$. Indeed these formulas show that $(v,w) \mapsto (v'', w'')$ is a linear involution for each $\om\in\bS^2$.

As in the case of the molecular collision kernel $c$, we assume that $b$ is a cutoff kernel associated with a hard potential, i.e. we assume that there exists $b_*>1$ and $\b^*\in[0,1]$ such that
\be\label{ela5}
\ba
{}&0<b(z,\om)\le b_*(1+|z|)^{\b^*}\,,&&\quad\hbox{ for a.e. }(z,\om)\in\bR^3\times\bS^2\,,
\\
&\int_{\bS^2}b(z,\om)\,\dd\om\ge\frac1{b_*}\frac{|z|}{1+|z|}\,,&&\quad\hbox{ for a.e.}z\in\bR^3\,.
\ea
\ee
We also assume that (for a.e. $\omega \in \cS^2$) 
\be\lb{bC1}
b(\cdot,\om)\in C^1(\bR^3\setminus\{0\})\,,\hbox{ and }\sup_{\om\in\bS^2}|\d_zb(z,\om)|\le C (1+ |z|)\,.
\ee

\subsubsection{An inelastic model of collision integrals $\cD$ and $\cR$}\label{sec233}


Dust particles or droplets are macroscopic objects when compared to gas molecules. This suggests using the classical models of gas-surface interaction to describe the impingement of gas molecules on dust particles or droplets. Perhaps
the simplest such model of collisions has been introduced by F. Charles in \cite{FCharlesRGD08}, with a detailed discussion in section 1.3 of \cite{FCharlesPhD} and in \cite{FCharlesSDelJSeg}. We briefly recall this model below.

First, the (dimensional) particle-molecule cross-section is
$$
S_{pg}=\pi(r_g+r_p)^2,
$$
where $r_g$ is the molecular radius and $r_p$ the radius of dust particles or droplets. Then, the dimensionless particle-molecule cross-section is
$$
\Si_{pg}(|\eps v-w|)=1\,.
$$
The formulas for $S_{pg}$ and $\Si_{pg}$ correspond to a binary collision between two balls of radius $r_p$ and $r_g$.

Next, the measure-valued functions $\Pi_{pg}$ and $\Pi_{gp}$ are defined as follows:
\be\label{ine1}
\ba
\Pi_{pg}(v,\dd V\,\dd W)&:=K_{pg}(v,V,W)\,\dd V\dd W\,,
\\
\Pi_{gp}(w,\dd V\,\dd W)&:=K_{gp}(w,V,W)\,\dd V\dd W\,,
\ea
\ee 
where, 
\be\label{ine2}
\ba
K_{pg}(v,V,W):&=\tfrac1{2\pi^2}\left(\tfrac{1+\eta}\eta\right)^4\b^4\eps^3\exp\left(-\tfrac12\b^2\left(\tfrac{1+\eta}\eta\right)^2\left|\eps v-\frac{\eps V+\eta W}{1+\eta}\right|^2\right)
\\
&\qquad\times\int_{\bS^2}(n\cdot(\eps V-W))_+\left(n\cdot\left(\frac{\eps V+\eta W}{1+\eta}-\eps v\right)\right)_+dn ,
\ea
\ee
\be\label{ine3}
\ba
K_{gp}(w,V,W)&:=\tfrac1{2\pi^2}(1+\eta)^4\b^4\exp\left(-\tfrac12\b^2(1+\eta)^2\left|w-\frac{\eps V+\eta W}{1+\eta}\right|^2\right)
\\
&\qquad\times\int_{\bS^2}(n\cdot(\eps V-W))_+\left(n\cdot\left(w-\frac{\eps V+\eta W}{1+\eta}\right)\right)_+dn .
\ea
\ee
In these formulas
$$
\beta=\sqrt{\frac{m_g}{2 k_B T_{surf}}}
$$ 
where $k_B$ is the Boltzmann constant and $T_{surf}$ the surface temperature of the particles.

Thus, defining
\be\lb{DefPl}
P[\l](\xi,n):=\tfrac1{2\pi}\l^4\exp(-\tfrac12\l^2|\xi|^2)(\xi\cdot n)_+ ,
\ee
for each $\l>0$ and $n\in\bS^2$, we see that the integral kernels $K_{pg}$ and $K_{gp}$ are given in terms of $P$ by the expressions
$$
\ba
K_{pg}(v,V,W)=&\tfrac1\pi\eps^3\int P[\b\tfrac{1+\eta}\eta]\left(\tfrac{\eps V+\eta W}{1+\eta}-\eps v,n\right)((\eps V-W)\cdot n)_+dn,
\\
K_{gp}(w,V,W)=&\tfrac1\pi\int P[\b(1+\eta)]\left(w-\tfrac{\eps V+\eta W}{1+\eta},n\right)((\eps V-W)\cdot n)_+dn .
\ea
$$


\section{Assumptions on $\Pi_{pg}$ and $\Pi_{gp}$}\lb{S-3}


In the sequel, we shall state a theorem which holds for all collision integrals satisfying a few assumptions introduced below. 

We recall that $\Pi_{pg}$ and $\Pi_{gp}$ are nonnegative measure-valued functions of the variable $v\in\bR^3$ and $w\in\bR^3$ resp., which depend in general on the small parameters $\eps$ and $\eta$ (see formulas (\ref{ela1}), (\ref{ela4}), 
and (\ref{ine1})-(\ref{ine3})). We do not make this dependence explicit, unless if necessary (as in Assumptions (H4)-(H5) below). In this case, we write $\Pi_{pg}^{\eps, \eta}$ and $\Pi_{gp}^{\eps, \eta}$ instead of $\Pi_{pg}$ and $\Pi_{gp}$.

\bigskip
\noindent
\textbf{Assumption (H1).} There exists a nonnegative measurable function 
$$
q\equiv q(r)\le C(1+r)\quad\hbox{ for some }C>0
$$
such that the measure-valued functions $\Pi_{pg}$ and $\Pi_{gp}$ satisfy
$$
\int_{\bR^3}\Pi_{pg}(v,\dd V\dd W)\dd v=\int_{\bR^3}\Pi_{gp}(w,\dd V\dd W)\,\dd w=q(|\eps V-W|)\dd V\dd W\,.
$$
Note that Assumption (H1) is coherent with the fact that in the last two lines of (\ref{Colli2}), the same cross-section $\Si_{pg}$ appears (and thus with the conservation of mass).

\bigskip
\noindent
\textbf{Assumption (H2).} There exists a function $Q\equiv Q(r)\in C(\bR_+^*)$ satisfying
$$
Q\ge 0\,,\quad\hbox{ and }Q(r) + |Q'(r)|\le C(1+r)\hbox{ for some }C>0\,,
$$
such that the measure-valued functions $\Pi_{pg}$ and $\Pi_{gp}$ satisfy
$$
\ba
\eps\int_{\bR^3}\dd v\,(v-V)\Pi_{pg}(v,\dd V\dd W)&=-\eta\int_{\bR^3}\dd w\,(w-W)\Pi_{gp}(w,\dd V\dd W)
\\
&=-\frac{\eta}{1+\eta}(\eps V-W)Q(|\eps V-W|)\dd V\dd W\,.
\ea
$$
This assumption implies the conservation of momentum between molecules and particles.

\bigskip
\noindent
\textbf{Assumption (H3).} There exists a constant $C>0$ such that the measure-valued function $\Pi_{pg}$ satisfies
$$
\int_{\bR^3}\dd v\,\left|\eps v-\frac{\eps V+\eta W}{1+\eta}\right|^2\Pi_{pg}(v,\dd V\dd W)\le C\,\eta^2\,(1+|\eps V-W|^2)q(|\eps V-W|)\dd V\dd W ,
$$
where $q$ is the function appearing in Assumption (H1).

\bigskip
\noindent
\textbf{Assumption (H4).} The limiting measure $\Pi^{0,0}_{gp}$ satisfies the following invariance\footnote{The notation $\cT\# m$ designates the push-forward of the measure $m$ by the transformation $\cT$.} property:
$$
\cT_R\#\Pi^{0,0}_{gp}=\Pi^{0,0}_{gp}\quad\hbox{ for each }R\in O_3(\bR)\,,
$$
where
\be\label{defTR}
\cT_R:\,(w,V,W)\mapsto(Rw,V,RW)\,.
\ee 
Besides, for each  $\Phi := \Phi(w,W)$ such that $|\Phi(w,W)|\le C (1+|w|^2+|W|^2)M(W)$, 
$$
\int_{\bR^3}(1+ |V|^2)^{-p}\left|\iint_{\bR^3\times\bR^3}\Phi(w,W)(\Pi^{\eps,\eta}_{gp}(w,\dd V\dd W)-\Pi^{0,0}_{gp}(w,\dd V\dd W))\,\dd w\right|\to 0
$$
for some $p>3$, as $\eps,\eta\to 0$. Moreover, 
$$
\iint_{\bR^3\times\bR^3}\dd w\,(1+|w|^2+|W|^2)M(W)\Pi^{0,0}_{gp}(w,dVdW)\in L^1((1+V^2)^{-3} dV)\,.
$$

\bigskip
\noindent
\textbf{Assumption (H5).} For all $h \in L^2(M(w)\dd w)$, 
$$
\ba
\iiint_{\bR^3\times \bR^3\times\bR^3}(1\!+\!|W|^2)(1\!+\!|V|^2)^{-p}(1\!+\!|w|^2)M(W)|h(W)|\Pi^{\eps,\eta}_{gp}(w,\!\dd V\!\dd W)\,\dd w&
\\
\le C ||h||_{L^2(M(w)\dd w)}&, 
\ea
$$
where $C$ does not depend on $\eta$ and $\eps$ (for $\eta$ and $\eps$
close to $0$).

\bigskip
We next prove that the elastic and inelastic models previously introduced (in sections \ref{sec232} and \ref{sec233} resp.) satisfy the assumptions (H1)-(H5). 

\subsection{Verification of (H1)-(H5) for the elastic collision model}


\begin{proposition}\label{hypelast}
For each collision kernel $b$ of the form (\ref{ela2}) satisfying (\ref{ela5}), let the quantities $\Sigma_{pg}$, $\Pi_{pg}$ and $\Pi_{gp}$ be defined by (\ref{ela1}), (\ref{ela3}) and (\ref{ela4}). Then, assumptions (H1)-(H5) are satisfied, with 
\be\label{H11}
q(|\eps v-w|)=4\pi\int_0^1 |\eps v-w|\,\sigma_{pg}(|\eps v-w|,\mu)\,\dd\mu\,,
\ee
\be\label{H12} 
Q(|\eps v-w|)=8\pi\int_0^1 |\eps v-w|\,\sigma_{pg}(|\eps v-w|,\mu)\mu^2\,\dd\mu\,,
\ee
and
\be\label{H13}
C = 1.
\ee
\end{proposition}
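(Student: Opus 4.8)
The plan is to dispatch the three ``algebraic'' assumptions (H1)--(H3) by a single change of variables, and the two ``analytic'' assumptions (H4)--(H5) by a limiting argument based on dominated convergence and the Cauchy--Schwarz inequality. For (H1)--(H3) the common tool is the collision map $T_\om\colon(v,w)\mapsto(v'',w'')$ of (\ref{ela1}), which by (\ref{symcol}) is a volume-preserving linear involution for each $\om$ and satisfies $\eps v''-w''=R_\om(\eps v-w)$. In each case I would substitute the definition (\ref{ela4}) of $\Pi_{pg}$ (resp.\ $\Pi_{gp}$) into the left-hand side and, at fixed $\om$, change variables $(v,w)\mapsto(V,W)=(v'',w'')$ (Jacobian $1$) to integrate out the two Dirac masses. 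Since $R_\om$ is an isometry preserving $|\cos(\widehat{\cdot,\om})|$, the kernel becomes $b(\eps v-w,\om)=|\eps V-W|\sigma_{pg}(|\eps V-W|,|\cos(\widehat{\eps V-W,\om})|)$ after the substitution, leaving only an integral over $\bS^2$. For (H1) this integral is $\int_{\bS^2}b\,\dd\om$, which in polar coordinates with axis $\eps V-W$ equals (\ref{H11}); the same triple integral arises from both $\Pi_{pg}$ and $\Pi_{gp}$, giving the first equality of (H1), and the bound $q(r)\le C(1+r)$ follows from the first line of (\ref{ela5}) because $\b^*\le1$.

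For (H2) I would use that, after the substitution, $\eps(v-V)=-\tfrac{2\eta}{1+\eta}((\eps V-W)\cdot\om)\om$ (and symmetrically $w-W=\tfrac{2}{1+\eta}((\eps V-W)\cdot\om)\om$ for $\Pi_{gp}$). The surviving angular integral $\int_{\bS^2}((\eps V-W)\cdot\om)\,\om\,|\eps V-W|\sigma_{pg}\,\dd\om$ is parallel to $\eps V-W$ by rotational symmetry, and its scalar coefficient, computed in polar coordinates, is exactly $\tfrac12Q(|\eps V-W|)$ with $Q$ as in (\ref{H12}); combined with the prefactor $\tfrac{2\eta}{1+\eta}$ this yields the asserted identity with its $\tfrac{\eta}{1+\eta}$ factor, and both operators produce the same expression (hence conservation of momentum). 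The bound on $Q$ comes again from (\ref{ela5}), while the bound on $Q'$ follows by differentiating under the integral, writing $r\sigma_{pg}(r,\mu)=b(re,\om)$ for a unit vector $e$ with $|e\cdot\om|=\mu$ so that its $r$-derivative $e\cdot\d_zb(re,\om)$ is controlled by (\ref{bC1}). For (H3) the same substitution gives the clean identity $\eps v-\tfrac{\eps V+\eta W}{1+\eta}=\tfrac{\eta}{1+\eta}R_\om(\eps V-W)$, whose squared norm $\tfrac{\eta^2}{(1+\eta)^2}|\eps V-W|^2$ is independent of $\om$; pulling it out of the angular integral leaves the $q$-integral of (H1), and the elementary bound $\tfrac{1}{(1+\eta)^2}|\eps V-W|^2\le1+|\eps V-W|^2$ gives (H3) with the constant (\ref{H13}), namely $C=1$.

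For (H4) I would first identify the limiting kernel. As $\eps,\eta\to0$ the push-forward in the $V$-variable converges to Lebesgue measure, while $w''\to R_\om w$ and $b(\eps v-w,\om)\to b(-w,\om)$, so that $\Pi^{0,0}_{gp}(w,\dd V\dd W)=\dd V\otimes\int_{\bS^2}b(-w,\om)\de_{R_\om w}(\dd W)\,\dd\om$. Its $O_3$-invariance is checked directly from the conjugation identity $RR_\om R^{-1}=R_{R\om}$ together with the rotation-invariance of $b$ and of Lebesgue measure, after the substitutions $w\mapsto Rw$, $\om\mapsto R\om$. The product structure makes the last integrability requirement immediate, since $\int(1+|V|^2)^{-3}\,\dd V<\infty$ while the $W$-integral against $(1+|w|^2+|W|^2)M(W)$ converges because $W=R_\om w$ and $b$ grows only polynomially. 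The weighted convergence is the heart of the argument: realizing the $V$-marginal of $\iint\Phi\,\Pi^{\eps,\eta}_{gp}\,\dd w$ as a density via the change of variables $v\mapsto V=v''$ at fixed $(w,\om)$ (Jacobian $\tfrac{1+\eta}{1-\eta}\to1$), one checks $\eps v(V,w,\om)\to0$ so that the integrand converges pointwise to that of $\Pi^{0,0}_{gp}$, and dominates it using $|\Phi(w,W)|\le C(1+|w|^2+|W|^2)M(W)$: the Gaussian $M(w'')$ beats the polynomial growth of $b$ in $w$, while $(1+|V|^2)^{-p}$ with $p>3$ secures integrability in $V$.

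The bound (H5) is obtained along the same lines: after inserting (\ref{ela4}) and integrating out the Dirac masses, one changes variables $w\mapsto w''$ so that $h$ is evaluated at the integration variable, and applies Cauchy--Schwarz to split off $\|h\|_{L^2(M\,\dd w)}$, the remaining coefficient being controlled by $(1+|V|^2)^{-p}$ ($p>3$) in the $v$-integration and by the Gaussian weight $M(w'')$. The main obstacle in both (H4) and (H5) is that the convergence $w''(v,w,\om)\to R_\om w$ is not uniform in $v$ and that the post-collisional velocities couple the large-$V$ and large-$w$ regions through the factor $\eps v$ (and through the $\tfrac1\eps w$ correction in the inverse map $v(V,w,\om)$); controlling these large-velocity contributions \emph{uniformly in} $(\eps,\eta)$ is precisely the role of the Gaussian weight $M(W)$ and of the polynomial weight $(1+|V|^2)^{-p}$ with $p>3$, and making the domination estimates uniform is the delicate point.
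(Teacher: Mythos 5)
Your treatment of (H1)--(H3) is correct and coincides with the paper's: both rest on the fact that $(v,w)\mapsto(v'',w'')$ is a measure-preserving involution under which $b(\eps v-w,\om)$ is invariant (via $\eps v''-w''=R_\om(\eps v-w)$), followed by the angular integrations in polar coordinates giving $q$, $\tfrac12Q$ and the exact identity $\bigl|\eps v-\tfrac{\eps V+\eta W}{1+\eta}\bigr|^2=\tfrac{\eta^2}{(1+\eta)^2}|\eps V-W|^2$. You even supply the bound on $Q'$ via (\ref{bC1}), which the paper leaves implicit.

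For (H4)--(H5), however, your choice of parametrization creates a gap that you flag (``making the domination estimates uniform is the delicate point'') but do not close, and it is not a harmless technicality. The paper performs the \emph{full} involution $(v,w)\mapsto(v'',w'')$ in both variables, after which $(V,W)=(v,w)$ are the integration variables, the first slot of $\Phi$ receives $w''$, and the Gaussian weight is exactly $M(W)=M(w)$, sitting on an integration variable. The post-collisional velocities then occur only inside polynomially growing factors, and the elementary bound $|w''|\le C(1+|v|+|w|)$ yields the uniform domination $|\Phi(w'',w)|\,b(\eps v-w,\om)\le C(1+|w|^2)^{3/2}(1+|v|^2)^{3/2}M(w)$, integrable against $(1+|v|^2)^{-p}$ for $p>3$; (H5) is then one application of Cauchy--Schwarz. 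Your route instead changes variables only in $v$ (to make the $V$-marginal a density), which leaves the Gaussian evaluated at the post-collisional velocity, $M(w'')$, and the polynomial weight at $V=v''$. The available lower bounds are of the form $|w''|\ge(1-C\eta)|w|-C\eps|V|$ and $|v''|\ge(1-2\eta)|v|-\tfrac{2\eta}{\eps}|w|$, which degrade exactly in the large-velocity regions you are trying to control: in the region $|w|\lesssim\eps|V|$ the factor $M(w'')$ gives no decay in $w$, and a crude count of the polynomial powers there shows the resulting $V$-integral is only controlled for $p$ substantially larger than $3$ (roughly $p>9/2$), not for the stated range. So the assertion that ``the Gaussian $M(w'')$ beats the polynomial growth of $b$ in $w$'' is precisely what fails uniformly in $(\eps,\eta)$ in your coordinates. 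The remedy is simply to use the two-variable involution as in (H1)--(H3), which removes the obstruction entirely; as written, your (H4)--(H5) argument is incomplete at its central step.
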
 

\begin{proof}
For each continuous and compactly supported test functions $\phi\equiv\phi(v,V,W)$ and $\psi\equiv\psi(w,V,W)$, one has
\be\lb{for1}
\ba
\iiint_{\bR^3\times\bR^3\times\bR^3}\phi(v,V,W) \Pi_{pg}(v,\dd V\dd W)\,\dd v& 
\\
=\iiint_{\bR^3\times\bR^3\times\bS^2}\phi(v,v'',w'')b(\eps v - w, \om)\,\dd\om\dd w\dd v&
\\
= \iiint_{\bR^3\times\bR^3\times\bS^2}\phi(v'',v,w) b(\eps v - w, \omega)\,\dd\om\dd w\dd v&\,.
\ea
\ee
where the last equality follows from the fact that the map $(v,w) \mapsto (v'', w'')$ is a linear involution for each $\om\in\bS^2$. By the same token
\be\lb{for2}
\ba
\iiint_{\bR^3\times\bR^3\times\bR^3}\psi(w,V,W)\Pi_{gp}(w,\dd V\dd W)\dd w
\\
=\iiint_{\bR^3\times\bR^3\times\bS^2}\psi(w'',v,w) b(\eps v - w, \om) \dd\om\dd w\dd v&\,.
\ea
\ee
Observing that 
$$
\int_{\bS^2}b(\eps v-w,\om)\,\dd\om=4\pi|\eps v-w|\int_0^1\sigma_{pg}(|\eps v-w|,\mu)\,\dd\mu\,, 
$$
one arrives at assumption (H1) with $q$ defined by (\ref{H11}).
 
Then we see that 
$$
\ba
\eps(v-v'')\cdot\om&=-\eta(w-w'')\cdot\om
\\
&= -\frac{2\eta}{1+\eta}(w-\eps v)\cdot\om=\frac{2\eta}{1+\eta}(w''-\eps v'')\cdot\om\,,
\ea
$$
and that
$$ 
\ba
\int_{\bS^2}\left((\eps v'' - w'')\cdot\om\right)b(\eps v - w,\om)\om\,\dd\om&
\\
=4\pi|\eps v-w|(\eps v''-w'')\int_0^1\sigma_{pg}(|\eps v-w|,\mu)\mu^2\,\dd\mu&\,,
\ea
$$
and conclude that assumption (H2) holds with $Q$ defined by (\ref{H12}).
 
Observing that 
$$ 
\ba
\bigg| \eps v - \frac{\eps v'' + \eta w''}{1 + \eta}\bigg|^2=&\left(\frac{\eta}{1+\eta}\right)^2\, |\eps v'' -w''|^2
\\
=&\left(\frac{\eta}{1+\eta}\right)^2\,|\eps v -w|^2 \le \eta^2\,  |\eps v -w|^2\,,
\ea
$$
shows that assumption (H3) holds with $C=1$.

Next, one has
$$
\ba
\iiint_{\bR^3\times\bR^3\times\bR^3}\phi(w,V,W)\Pi^{0,0}_{gp}(w,\dd V\dd W)\,\dd w&
\\
=\iiint_{\bR^3\times\bR^3\times\bS^2}\phi(w-2w\cdot\om\om,v,w)b(w,\om)\,\dd v\dd w\dd\om&\,,
\ea
$$ 
which obviously implies the relation
$$
\Pi^{0,0}_{gp}=T_R\#\Pi^{0,0}_{gp}\,.
$$
Besides, for each $p>3$ and each continuous $\Phi\equiv\Phi(w,W)$ such that 
$$
|\Phi(w,W)| \le C (1+|w|^2+|W|^2)M(W)
$$
one has
$$
\ba
\int_{\bR^3}(1+ |V|^2)^{-p}\left|\iint_{\bR^3\times\bR^3}\Phi(w,W)(\Pi^{\eps,\eta}_{gp}(w,\dd V\dd W)-\Pi^{0,0}_{gp}(w,\dd V\dd W))\,\dd w\right|&
\\
=\!\!\!\int_{\bR^3}\!\!\!(1+ |v|^2)^{-p}\left|\iint_{\bR^3\times\bS^2}\!\!\left(\Phi(w'',w)b(\eps v\! -\! w,\om)\!-\!\Phi(\tilde w,w)b(w,\omega)\right)\dd v\dd w\dd\om\right|&\,,
\ea
$$
where
$$
\tilde w=w-2(w\cdot\om)\om\,.
$$
By continuity of $b$ and $\Phi$, we see that
$$
\Phi(w'',w)b(\eps v-w,\om)\to\Phi(\tilde w,w)b(w,\omega)
$$
as $\eps,\eta\to 0$. Then, using the estimate 
$$ 
|\Phi(w'',w)|b(\eps v - w,\omega)\le C(1+|w|^2)^{3/2}(1 + |v|^2)^{3/2}M(w)\,,
$$
we conclude that
$$
\int_{\bR^3} (1+ |V|^2)^{-p} \left|\iint\Phi(w,W)(\Pi^{\eps,\eta}_{gp}(w,dVdW)dw-\Pi^{0,0}_{gp}(w,dVdW)dw)\right| \to 0
$$   
as $\eps,\eta\to 0$ by dominated convergence. Finally we observe that 
$$
\ba
\iint_{\bR^3\times\bR^3\times\bR^3}(1+|w|^2 + |W|^2)(1+V^2)^{-3}M(W)\Pi^{0,0}_{gp}(w,\dd V\dd W)\,\dd w&
\\
= \iiint_{\bR^3\times\bR^3\times\bS^2} (1+|w-2(w\cdot\om)\om|^2+|w|^2)(1+|v|^2)^{-3}M(w)b(-w,\om)\,\dd\om\dd w\dd v&
\\
\le C\iint_{\bR^3\times\bR^3}(1+|w|^2) (1+v^2)^{-3}M(w)|w|\,\dd w\dd v<\infty&\,,
\ea
$$
so that assumption (H4) is satisfied.

Finally, for each $h\in L^2(M(w)\,\dd w)$ and each $\eps, \eta>0$ small enough,
$$  
\ba
\iiint_{\bR^3\times\bR^3\times \bR^3}(1\!+\!|W|^2)(1\!+\!|V|^2)^{-p}(1\!+\!|w|^2)M(W)|h(W)|\Pi^{\eps,\eta}_{gp}(w,\dd V\dd W)\,\dd w&
\\
=\iiint_{\bR^3\times \bR^3\times\bS^2}(1\!+\!|w|^2)(1\!+\!|v|^2)^{-p} (1\!+\!|w''|^2)M(w)|h(w)|b(\eps v-w,\om)\,\dd\om\dd w\dd v& 
\\
\le C\iint_{\bR^3\times \bR^3}(1\!+\!|w|^2)(1\!+\! |v|^2)^{-p}(1\!+\!|v|^2\!+\!|w|^2)(1\!+\!|v|\!+\!|w|)M(w)|h(w)|\,\dd w\dd v&
\\
\le C\int_{\bR^3}(1+|w|^2)^{5/2}M(w)|h(w)|\dd w\le C  ||h||_{L^2(M(w)\dd w)}&
\ea
$$
by the Cauchy-Schwarz inequality. Hence assumption (H5) is also verified.
\end{proof}

\subsection{Verification of (H1)-(H5) for the inelastic collision model}


\begin{proposition}\label{hypinelast}
The scattering kernels $\Pi_{pg}$ and $\Pi_{gp}$ defined by (\ref{ine1})-(\ref{ine3}) satisfy assumptions (H1)-(H5), with 
$$ 
q(|\eps v-w|)=|\eps v-w|\,,\quad Q(|\eps v-w|)=\frac{\sqrt{2\pi}}{3\b}+|\eps v-w|\,,
$$
and 
$$ 
C = \frac{16}{\beta^2}\,.
$$
\end{proposition}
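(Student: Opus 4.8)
The plan is to reduce everything to the single Gaussian profile $P[\l]$ of (\ref{DefPl}), for which three moment identities suffice. Computing in spherical coordinates with $n$ along the polar axis, I would first establish, for every $\l>0$ and $n\in\bS^2$,
$$
\ba
\int_{\bR^3}P[\l](\xi,n)\,\dd\xi=1\,,\qquad\int_{\bR^3}\xi\,P[\l](\xi,n)\,\dd\xi=\frac{\sqrt{2\pi}}{2\l}\,n\,,
\\
\int_{\bR^3}|\xi|^2\,P[\l](\xi,n)\,\dd\xi=\frac{4}{\l^2}\,,
\ea
$$
together with the two elementary surface integrals
$$
\int_{\bS^2}(z\cdot n)_+\,\dd n=\pi|z|\,,\qquad\int_{\bS^2}(z\cdot n)_+\,n\,\dd n=\tfrac{2\pi}3\,z\,.
$$
All of (H1)--(H3) then follow from the change of variables $\xi=\frac{\eps V+\eta W}{1+\eta}-\eps v$ in the $v$-integral of $K_{pg}$ (with Jacobian $\eps^{-3}$ exactly cancelling the prefactor $\eps^3$) and $\xi=w-\frac{\eps V+\eta W}{1+\eta}$ in the $w$-integral of $K_{gp}$.

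For (H1), the zeroth-moment identity turns both $\int K_{pg}\,\dd v$ and $\int K_{gp}\,\dd w$ into $\frac1\pi\int_{\bS^2}((\eps V-W)\cdot n)_+\,\dd n=|\eps V-W|$, i.e. $q(r)=r$. For (H2), the first-moment identity produces the vector term $\frac{\sqrt{2\pi}}{2\l}n$; integrating it against $((\eps V-W)\cdot n)_+$ via the second surface integral, and inserting $\l=\b\tfrac{1+\eta}\eta$ for $K_{pg}$ and $\l=\b(1+\eta)$ for $K_{gp}$, I would obtain
$$
\eps\!\int_{\bR^3}\!(v-V)K_{pg}\,\dd v=-\eta\!\int_{\bR^3}\!(w-W)K_{gp}\,\dd w=-\frac{\eta}{1+\eta}(\eps V-W)\left(|\eps V-W|+\frac{\sqrt{2\pi}}{3\b}\right)\,,
$$
which is exactly (H2) with $Q(r)=r+\frac{\sqrt{2\pi}}{3\b}$. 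For (H3), since $\eps v-\frac{\eps V+\eta W}{1+\eta}=-\xi$, the second-moment identity gives $\int|\eps v-\frac{\eps V+\eta W}{1+\eta}|^2K_{pg}\,\dd v=\frac4{\l^2}|\eps V-W|=\frac{4\eta^2}{\b^2(1+\eta)^2}|\eps V-W|$; bounding $(1+\eta)^{-2}\le1$ and $|\eps V-W|\le(1+|\eps V-W|^2)|\eps V-W|$ yields (H3) with $q(r)=r$ and the stated constant $C=16/\b^2$.

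The substance of the proposition lies in (H4)--(H5). As $\eps,\eta\to0$ one has $\frac{\eps V+\eta W}{1+\eta}\to0$, $\b(1+\eta)\to\b$ and $\eps V-W\to-W$, so the limiting kernel is $K^{0,0}_{gp}(w,W)=\frac1\pi\int_{\bS^2}P[\b](w,n)(-W\cdot n)_+\,\dd n$, which no longer depends on $V$. The rotation invariance in (H4) I would check directly: substituting $n\mapsto Rn$ and using $P[\b](Rw,Rn)=P[\b](w,n)$, $(RW)\cdot(Rn)=W\cdot n$, and the rotation-invariance of $\dd n,\dd V,\dd W,\dd w$ gives $K^{0,0}_{gp}(Rw,RW)=K^{0,0}_{gp}(w,W)$, hence $\cT_R\#\Pi^{0,0}_{gp}=\Pi^{0,0}_{gp}$. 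For the convergence statement, I would argue by two nested applications of dominated convergence: pointwise in $V$ the inner $(w,W,n)$-integral converges by continuity and the explicit Gaussian decay of the kernels, while the uniform-in-$(\eps,\eta)$ bound $((\eps V-W)\cdot n)_+\le\eps|V|+|W|$, after performing the Gaussian $w$-integral against $(1+|w|^2)$ and the weight $M(W)$, leaves a quantity controlled by $C(1+|V|)$, which is integrable against $(1+|V|^2)^{-p}\,\dd V$ for $p>2$ (a fortiori $p>3$). The final moment bound of (H4) is immediate because $\Pi^{0,0}_{gp}$ is $V$-free, so the $V$-integral factors as $\int(1+|V|^2)^{-3}\,\dd V<\infty$ while the $(w,W)$-integral converges by the joint Gaussian decay of $P[\b](w,n)$ and $M(W)$.

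Assumption (H5) I would treat exactly as in the elastic case of Proposition \ref{hypelast}: substitute the explicit kernel, integrate in $w$ first (a Gaussian moment), bound $((\eps V-W)\cdot n)_+\le C(1+|V|)(1+|W|)$, integrate in $V$ against $(1+|V|^2)^{-p}(1+|V|)$ (finite for $p$ large), and reduce to $\int_{\bR^3}(1+|W|^2)^{k}M(W)|h(W)|\,\dd W\le C\|h\|_{L^2(M\,\dd w)}$ by Cauchy--Schwarz. The main obstacle is not the algebra of (H1)--(H3), which is routine Gaussian bookkeeping, but the fact that the scattering velocity $\eps V$ degenerates in the limit, so that $\Pi^{0,0}_{gp}$ loses all $V$-dependence; the $V$-integrals only converge through the weights $(1+|V|^2)^{-p}$, and the technical heart of the proof is to produce estimates on the $\eps,\eta$-dependent kernels that are simultaneously uniform in the parameters and integrable in $V$ after weighting, so that dominated convergence can legitimately be invoked.
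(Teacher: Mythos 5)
Your proposal is correct and follows essentially the same route as the paper: the three Gaussian moments of $P[\l]$ you list are exactly what the paper computes via the substitutions $b=\b\tfrac{1+\eta}{\eta}\bigl(\tfrac{\eps V+\eta W}{1+\eta}-\eps v\bigr)$ and $b'=\b(1+\eta)\bigl(w-\tfrac{\eps V+\eta W}{1+\eta}\bigr)$, the limiting kernel $K^{0,0}_{gp}(w,W)=\tfrac1\pi\int_{\bS^2}P[\b](w,n)(-W\cdot n)_+\,\dd n$ and the two-level dominated-convergence argument for (H4) coincide with the paper's, as does the substitution-plus-Cauchy--Schwarz treatment of (H5); your exact value $\tfrac{4}{\b^2}\bigl(\tfrac{\eta}{1+\eta}\bigr)^2|\eps V-W|$ in (H3) is in fact sharper than the stated $C=16/\b^2$ (the paper itself over-estimates by replacing $((\eps V-W)\cdot n)_+$ with $|\eps V-W|$ before integrating over $\bS^2$). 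One small correction: in the (H4) domination step the quantity left after the $y$- and $W$-integrations grows like $C(1+|V|^3)$, not $C(1+|V|)$, since $\Phi$ may grow quadratically in $|w|\sim|V|$ and the flux factor contributes another power of $|V|$; this is why the hypothesis genuinely requires $p>3$ rather than $p>2$, though your conclusion is unaffected because $p>3$ is what is assumed.
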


As in the previous section, we explicitly mention the $\eps,\eta$-dependence of the scattering kernels whenever needed, in which case we use the notation $K^{\eps,\eta}_{pg},K^{\eps,\eta}_{gp}$ to designate $K_{pg}$ and $K_{gp}$ respectively.

\begin{proof}
Setting successively $a=\frac{\eps V+ \eta W}{1+\eta}-\eps v$ and $b =\beta\left(\frac{1+\eta}{\eta}\right)a$, we see that 
$$
\ba
\int_{\bR^3}K_{pg} (v,V,W)\,\dd v&
\\
=\iint_{\bR^3\times\bS^2}\frac{\beta^4}{2\pi^2}\left(\tfrac{1+\eta}{\eta}\right)^4\exp\left(-\tfrac12\beta^2\left(\tfrac{1+\eta}{\eta}\right)^2|a|^2\right)(a \cdot n)_+ \left((\eps  V-W)\cdot n\right)_+\,\dd a\dd n&
\\
=\frac1{2\pi^2}\iint_{\bR^3\times\bS^2}\exp(-\tfrac12|b|^2)(b \cdot n)_+\left((\eps  V- W)\cdot n\right)_+\,\dd b\dd n&
\\
=\tfrac1{\pi}\int_{\bS^2}\left((\eps  V- W)\cdot n\right)_+\,\dd n=|\eps V- W|&\,. 
\ea
$$

Likewise, setting $b'=\beta(1+\eta)\left(w-\frac{\eps V+ \eta W}{1+\eta}\right)$, we see that
$$ 
\ba
\int_{\bR^3}K_{gp} (w,V,W)\,\dd w&
\\
=\frac1{2\pi^2}\iint_{\bR^3\times\bS^2}\exp(-\tfrac12|b'|^2)(b'\cdot n)_+\left((\eps  V- W)\cdot n\right)_+\,\dd b'\dd n=|\eps  V- W|&\,.
\ea
$$
so that assumption (H1) is verified.

With $b =\beta\left(\frac{1+\eta}{\eta}\right)\left(\frac{\eps V+ \eta W}{1+\eta}-\eps v\right)$ as above
$$ 
\ba
\eps\int_{\bR^3}(v-V)K_{pg} (v,V,W)\,\dd v&
\\
=\frac1\pi\frac{\eta}{1+\eta}\int_{\bS^2}\left((W-\eps  V)-\frac1{2\pi\beta}\int_{\bR^3}b e^{- |b|^2/2}(b \cdot n)_+\,\dd b\right)((\eps  V- W)\cdot n)_+\,\dd n&
\\
=-\frac1\pi\frac{\eta}{1+\eta}\left(\int_{\bS^2}(\eps V\! -\! W)\left((\eps  V\!-\! W)\cdot n\right)_+\,\dd n\!+\!\frac{\sqrt{2\pi}}{2\beta}\int n \left((\eps  V\!-\! W)\cdot n\right)_+\,\dd n\right)&
\\
= -\frac{\eta}{1+\eta}(\eps  V- W)\left(|\eps  V- W|+\frac{(2\pi)^{1/2}}{3\beta}\right)&\,.
\ea
$$
Likewise, setting $b'=\beta(1+\eta)\left(w-\frac{\eps  V+ \eta W}{1+\eta}\right)$ as above, we see that
$$ 
\ba
-\eta\int_{\bR^3}(w - W)K_{gp}(w,V,W)\,\dd w
\\
=-\frac1{\pi}\frac{\eta}{1+\eta}\int\left(\frac{(2\pi)^{1/2}}{2\beta}n+(\eps  V- W)\right)\left((\eps  V-W)\cdot n\right)_+\,\dd n 
\\
=-\frac{\eta}{1+\eta}(\eps V-W)\left(|\eps V-W|+\frac{ (2\pi)^{1/2}}{3\beta}\right) 
\ea
$$
so that assumption (H2) is also satisfied.

Still with $b=\beta\left(\frac{1+\eta}{\eta}\right)\left(\frac{\eps V+ \eta W}{1+\eta}-\eps v\right)$, one has
$$
\ba 
\int_{\bR^3}\left|\frac{\eps  V+ \eta W}{1+\eta}-\eps v\right|^2K_{pg}(v,V,W)\,\dd v& 
\\
=\left(\frac{\eta}{1+\eta}\right)^2\frac1{\pi\beta^2}\int_{\bS^2}\left(\frac{1}{2\pi}\int_{\bR^3}|b|^2\exp(-\tfrac12 |b|^2)(b\cdot n)_+\,\dd b\right)\,\dd n\,|\eps V-W|&
\\
\le\frac{16}{\beta^2}\left(\frac{\eta}{1+\eta}\right)^2|\eps  V- W|&\,,
\ea
$$ 
so that assumption (H3) is satisfied.

Observe that
$$
\Pi^{0,0}_{gp}(w,\dd V,\dd W)=K^{0,0}(w,W)\,\dd V\dd W
$$
with
$$
\ba
K^{0,0}(w,W)&=\tfrac1{2\pi^2}\b^4\exp(-\tfrac12\b^2|w|^2)\int_{\bS^2}(-W\cdot n)_+(w\cdot n)_+\,\dd n
\\
&=K^{0,0}(Rw,RW)
\ea
$$
for each $R\in O_3(\bR)$. Hence $\cT_R\#\Pi^{0,0}_{gp}=\Pi^{0,0}_{gp}$, which is the first property in (H4). 

On the other hand
$$
\ba
\iiint_{\bR^3\times\bR^3\times\bR^3}(1+|V|^2)^{-3}(1+|w|^2+|W|^2)M(W)\Pi^{0,0}_{gp}(w,\dd V,\dd W)\,\dd w 
\\
=\int_{\bR^3}\frac{\dd V}{(1+|V|^2)^3}\iint_{\bR^3\times\bR^3}(1+|w|^2+|W|^2)M(W)K^{0,0}(w,W)\,\dd w\dd W<\infty
\ea
$$
since
$$
0\le K^{0,0}(w,W)\le\tfrac2\pi\b^4|w||W|\exp(-\tfrac12\b^2|w|^2)\,.
$$
Hence the third property in (H4) is verified.

Let $\Phi\equiv\Phi(w,W)$ be such that $|\Phi(w,W)| \le C(1+|w|^2+|W|^2)M(W)$. Then, 
$$
\ba
\frac{2\pi^2}{\b^4}|\Phi(w,W)|K_{gp}^{\eps,\eta}(w,V,W)\le C (|V|+|W|+|w|)(|V|+|W|)&
\\
\times(1+|w|^2+|W|^2)M(W)\exp\left(-\tfrac12\beta^2\left|w-\frac{\eps V + \eta W}{1+\eta}\right|^2\right)&\,. 
\ea
$$
Since
$$ 
\ba
|W|^2+\beta^2\left|w-\frac{\eps}{1+\eta}V-\frac{\eta}{1+\eta}W\right|^2&
\\
\ge\min(1,\beta^2/2)\left(|W|^2+\left|w-\frac{\eta}{1+\eta}W\right|^2-|V|^2\right)&
\\
\ge\min(1,\beta^2/2)(|W|^2+|w|^2-2|V|^2)&\,,
\ea
$$
we see that 
$$
\ba
\frac{2\pi^2}{\b^4}|\Phi(w,W)|K_{gp}^{\eps,\eta}(w,V,W)\le C (|V|+|W|+|w|)(|V|+|W|)&
\\
\times(1+|w|^2+|W|^2)\exp(\mu|V|^2)\exp(-\tfrac12\mu(|w|^2+|W|^2))&\,,
\ea
$$
with
$$
\mu:=\min(1,\beta^2/2)\,.
$$
Therefore 
$$
\ba
\iint_{\bR^3\times\bR^3}\Phi(w,W)K_{gp}^{\eps,\eta}(w,V,W)\,\dd w\dd W
\\
\to\iint_{\bR^3\times\bR^3}\Phi(w,W)K_{gp}^{0,0}(w,V,W)\,\dd w\dd W
\ea
$$
for each $V\in\bR^3$ by dominated convergence. Setting $y=w-\frac{\eps V + \eta W}{1+\eta}$, one has
$$
\ba 
\frac{2\pi^2}{\b^4}(1\!+\!|V|^2)^{-p}\left|\iint_{\bR^3\times\bR^3}\Phi(w,W)K_{gp}^{\eps,\eta}(w,V,W)\,\dd w\dd W\right|
\\
=(1\!+\!|V|^2)^{-p}\left|\iint_{\bR^3\times\bR^3}\Phi\left(y\!+\!\frac{\eps V\!+\!\eta W}{1\!+\!\eta},W\right)(1\!+\!\eta)^4\exp\left(-\tfrac12\beta^2(1\!+\!\eta)^2|y|^2\right)\right.
\\
\left.\times\int_{\bR^3}\left((\eps V-W)\cdot n\right)_+(-y\cdot n)_+\,\dd n\dd y\dd W\right|
\\
\le C(1\!+\!|V|^2)^{-p}\iint_{\bR^3\times\bR^3}(1\!+\!|y|^2\!+\!|W|^2\!+\!|V|^2)M(W)\\
\\
\times\exp(-\tfrac12\beta^2|y|^2)|y|(|V|\!+\!|W|)\,\dd y\dd W
\ea
$$
which is integrable in $V\in\bR^3$ for $p>3$. Therefore,
$$
\int_{\bR^3} (1+ |V|^2)^{-p}\left|\iint\Phi(w,W)(\Pi^{\eps,\eta}_{gp}(w,\dd V\dd W)\,\dd w-\Pi^{0,0}_{gp}(w,\dd V\dd W)\,\dd w)\right|\to 0
$$
as $\eps,\eta\to 0$ for all $p>3$ by dominated convergence. This completes the verification of (H4).

Using again the substitution $y=w-\frac{\eps V + \eta W}{1+\eta}$, one has
$$
\ba
\iiint_{\bR^3\times \bR^3\times\bR^3}(1+|W|^2)M(W)|h(W)|(1+|V|^2)^{-p}(1+|w|^2)\Pi^{\eps,\eta}_{gp}(w,\dd V\dd W)\,\dd w&
\\
=\iiint_{\bR^3\times \bR^3\times\bR^3}(1+|W|^2)M(W)|h(W)|(1+|V|^2)^{-p}(1+|w|^2)&
\\
\times\frac{\beta^4}{2\pi^2}(1+\eta)^4\exp\left(-\tfrac12\beta^2(1+\eta)^2\left|w-\frac{\eps V + \eta W}{1+\eta}\right|^2\right)&
\\
\times\int_{\bS^2}(\eps V-W)\cdot n)_+\left(\left(\frac{\eps V+\eta W}{1+\eta}-w\right)\cdot n\right)_+\,\dd n\dd V\dd W\dd w&
\\
\le C \iiint_{\bR^3\times \bR^3\times\bR^3}(1+|W|^2)(1+ |V|^2)^{-p}(1+|V|^2+|W|^2+|y|^2)M(W)|h(W)|&
\\
\times|V+W||y|\exp\left(-\tfrac12\beta^2|y|^2\right)\,\dd V\dd W\dd y\le C\|h\|_{L^2(M(w)\,\dd w)}&\,,
\ea
$$
which is precisely assumption (H5).
\end{proof}


\section{Passage to the limit}\lb{S-4}


In this section, we use the material presented in sections \ref{S-2}-\ref{S-3} to state and prove the main result in this paper, i.e. the derivation of the Vlasov-Navier-Stokes model for thin sprays from the system of Boltzmann equations for a binary 
mixture of gas molecules and dust particles or droplets.

\subsection{Statement of the main result}


We henceforth consider a sequence of solutions $f_n\equiv f_n(t,x,w)$, and $F_n\equiv F_n(t,x,v)$ to the system of kinetic-fluid equations (\ref{BoltzSysSc2}), with sequences $\eps_n,\eta_n\to 0$ in the place of the parameters $\eps,\eta>0$:
\be\label{kifu}
\ba
{}&\d_tF_n+v\cdot\grad_xF_n=\frac1{\eta_n}\cD(F_n,f_n)\,,
\\
&\d_tf_n+\frac1{\eps_n}w\cdot\grad_xf_n=\cR(f_n,F_n)+\frac1{\eps_n^2}\cC(f_n)\,,
\ea
\ee
where $\cC$, $\cD$ and $\cR$ are defined by (\ref{cc1})-(\ref{cc3}), (\ref{newd}) and (\ref{newr}).

\begin{theorem}\lb{T-LimThm}
Assume that the scattering kernels $\Pi_{pg}^{\eps_n,\eta_n}$ and $\Pi_{gp}^{\eps_n,\eta_n}$ in (\ref{newd})-(\ref{newr}) satisfy (H1)-(H5), while the molecular collision kernel $c$ satisfies (\ref{cc4}). Assume further that the function $\a$ in 
(\ref{defalpha}) is bounded on $\bR_+$, and that 
$$
\eps_n\to 0\,,\quad\hbox{ and }\quad\eta_n/\eps_n^2\to 0\,.
$$

Let $g_n\equiv g_n(t,x,w)\ge 0$ and $F_n\equiv F_n(t,x,v)\ge 0$ be sequences of smooth (at least $C^1$) functions, and let
\be\label{kifu2}
f_n(t,x,w):=M(w)(1+\eps_n g_n(t,x,w)),
\ee 
where $M$ is the Maxwellian distribution (\ref{maxw}). Assume that 
$$
F_n\wto F\hbox{ in }L^\infty_{loc}\hbox{ weak-*}\,,\quad\hbox{ and that }g_n\wto g\hbox{ in }L^2_{loc}(\bR_+^* \times \bR^3 \times \bR^3)\hbox{ weak}
$$
for some $F\in L^\infty_{loc}(\bR_+\times\bR^3\times\bR^3)$ and $g\in L^2_{loc}(\bR_+^*\times\bR^3\times \bR^3)$.

\smallskip
Assume that

\noindent
(a) the pair $(F_n,f_n)$ is a solution to (\ref{kifu}), with $\cC,\cD,\cR$ defined by (\ref{cc1})-(\ref{cc3}), (\ref{newd}) and (\ref{newr})

\noindent
(b) there exists $p>3$ such that
$$
\sup_{n\ge 1}\sup_{(t,x,v)\in[0,R]\times[-R,R]^3\times\bR^3}(1+|v|^2)^pF_n(t,x,v)\le C_R<\infty
$$ 
for each $R>0$,

\noindent
(c) the sequence
$$
\int_{\bR^3}g_n(t,x,w)^2M(w)\,\dd w
$$
is bounded in $L^1_{loc}(\bR_+^* \times \bR^3)$,

\noindent
(d) the sequence of velocity averages of $g_n$
\be\label{cfor}
\int_{\bR^3}g_n\phi(w)M(w)\,\dd w\to\int_{\bR^3}g\phi(w)M(w)\,\dd w
\ee
strongly in $L^2_{loc}(\bR_+^* \times \bR^3)$ for each $\phi\in C_c(\bR^3)$.

\bigskip
Then there exist $L^{\infty}$ functions $\rho\equiv\rho(t,x)\in\bR$ and $\th\equiv\th(t,x)\in\bR$, and a $L^\infty$ vector field $u\equiv u(t,x)\in\bR^3$ s.t. 
\be \label{eqr}
\ba
g(t,x,w)=\rho(t,x)+u(t,x)\cdot w+\th(t,x)\tfrac12(|w|^2-3)&
\\
\hbox{ for a.e. }(t,x,w)\in\bR_+\times\bR^3\times\bR^3&\,,
\ea
\ee
and the pair $(F,u)$ satisfies the Vlasov-Navier-Stokes system
\be \label{VNS}
\left\{
\ba
{}&\d_tF+v\cdot\grad_xF=\ka\Div_v((v-u)F),
\\	
&\Div_xu=0,
\\	
&\d_tu+\Div_x(u\otimes u)=\nu\Dlt_xu-\grad_xp+\ka\int (v-u)F\,\dd v,
\ea
\right.
\ee
in the sense of distributions, with 
\be \label{nuka}
\nu:=\tfrac1{10}\int\tilde A:\cL\tilde A M(w)\,\dd w>0\,,\quad\ka:=\tfrac13\int Q(|w|)|w|^2M(w)\,\dd w>0,
\ee
where $Q$ is defined in assumption (H2), while $\tilde A,\cL$ are defined by (\ref{defAtilde})-(\ref{defL}).
\end{theorem}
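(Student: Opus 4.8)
The plan is to pass to the limit in the weak formulation of (\ref{kifu}), reading the Vlasov equation off the particle equation and the incompressible Navier--Stokes equations off the velocity moments of the gas equation, in the spirit of the Bardos--Golse--Levermore program. Throughout I write $\cQ$ for the bilinear operator defined by $\cC(M(1+\eps h))=-\eps M\cL h+\eps^2\cQ(Mh,Mh)$, which follows from $\cC(M)=0$ and the definition (\ref{defL}) of $\cL$. Substituting $f_n=M(1+\eps_n g_n)$ into the gas equation, using that $M$ is $(t,x)$-independent and that $\cR(f_n,F_n)=\cR(M,F_n)+\eps_n\cR(Mg_n,F_n)$, the gas equation becomes
\[
\cL g_n=\eps_n M^{-1}\cR(M,F_n)+\eps_n M^{-1}\cQ(Mg_n,Mg_n)-\eps_n\,w\cdot\grad_x g_n+O(\eps_n^2).
\]
Hence $\cL g_n\to0$; testing against $\cL\psi$ for $\psi\in\Dom\cL$, using the self-adjointness of $\cL$ (Theorem \ref{T-HilbGr}) and the bound (c), I obtain $\int g\,\cL\psi\,M\dd w=0$, so $g\in\Ker\cL$. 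By (\ref{null}) this is exactly (\ref{eqr}), with $\rho,u,\th$ the hydrodynamic moments of $g$ (in particular $u=\int wg\,M\dd w$).

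\emph{Vlasov equation.} Testing the particle equation against $\phi=\phi(v)$ and rewriting the loss term by (H1) gives the compact identity
\[
\int_{\bR^3}\cD(F_n,f_n)\phi\,\dd v=\iiint\big(\phi(v)-\phi(V)\big)\,\Pi^{\eps_n,\eta_n}_{pg}(v,\dd V\dd W)\,F_n(V)f_n(W).
\]
I expand $\phi(v)-\phi(V)=\grad\phi(V)\cdot(v-V)+O(|v-V|^2)$. The first-order term is computed by (H2); after division by $\eta_n$ it equals $-\tfrac1{\eps_n(1+\eta_n)}\iint\grad\phi(V)\cdot(\eps_n V-W)Q(|\eps_n V-W|)F_n(V)f_n(W)\,\dd V\dd W$, while (H3) bounds the quadratic remainder (divided by $\eta_n$) by $O(\eta_n/\eps_n^2)\to0$. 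Inserting $f_n=M(1+\eps_n g_n)$ and integrating in $W$, the Maxwellian background yields $\tfrac1{\eps_n}\int_{\bR^3}(\eps_n V-W)Q(|\eps_n V-W|)M(W)\,\dd W\to\ka V$ (the leading term vanishing by parity, an integration by parts producing precisely $\ka=\tfrac13\int Q(|w|)|w|^2M\dd w$), and the $\eps_n g_n$ correction yields $-\ka u$ through the $u\cdot w$ part of $g$ (the $\rho$ and $\th$ parts vanishing by parity). Using $F_n\wto F$ and the strong velocity-average convergence (d) to pass to the limit in the product, I get $\tfrac1{\eta_n}\int\cD(F_n,f_n)\phi\,\dd v\to-\ka\int(v-u)\cdot\grad\phi\,F\,\dd v$, i.e. the drag operator $\ka\Div_v((v-u)F)$.

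\emph{Fluid equations.} I take moments of the gas equation against the collision invariants $1$ and $w$, so the $\cC$-term drops by (\ref{star}). The $1$-moment reads $\eps_n\d_t\!\int g_nM\dd w+\grad_x\!\cdot\!\int wg_nM\dd w=0$ (the $\cR$-term vanishing by (\ref{starstar})), giving $\Div_x u=0$ in the limit. For the $w$-moment I split $w\otimes w=A(w)+\tfrac13|w|^2I$; the trace part is a pure gradient, killed by testing against divergence-free fields, and the traceless stress is linearised via $\int A g_n\,M\dd w=\int\tilde A\,\cL g_n\,M\dd w$ (self-adjointness and $\cL\tilde A=A$). Using the expansion above,
\[
\tfrac1{\eps_n}\!\int\! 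A g_n M\dd w\to\int\tilde A\,\cR^{0,0}(M,F)\dd w+\int\tilde A\,M^{-1}\cQ(Mg,Mg)M\dd w-\int\tilde A\,(w\cdot\grad_x g)M\dd w.
\]
The first matrix is traceless and, by the rotational invariance of $\Pi^{0,0}_{gp}$ in (H4), is invariant under $\cM\mapsto R\cM R^{\top}$ for every $R\in O_3(\bR)$, hence vanishes; the third yields the Newtonian viscosity $\nu=\tfrac1{10}\int\tilde A:\cL\tilde A\,M\dd w$; the second yields the traceless part of the convective stress $u\otimes u$ by the standard Bardos--Golse--Levermore identity. The $\cR$-source is converted, through the momentum balance (\ref{momDR}), into $\tfrac1{\eps_n}\int\cR w\,\dd w=-\tfrac1{\eta_n}\int\cD v\,\dd v\to\ka\int(v-u)F\,\dd v$, reproducing the drag. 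Testing the $w$-moment against divergence-free $x$-fields to discard every gradient (pressure) term gives $\d_t u+\Div_x(u\otimes u)=\nu\Dlt_x u-\grad_x p+\ka\int(v-u)F\,\dd v$, which with the previous steps and $\Div_x u=0$ is the system (\ref{VNS}).

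\emph{Main obstacle.} The delicate points are concentrated in the cross-collision terms. Passing to the limit in $\cQ(Mg_n,Mg_n)$ and in the products $F_n\times(\text{moments of }g_n)$ is possible only because (d) supplies strong compactness of the velocity averages, $F_n$ and $g_n$ being merely weakly convergent. Controlling $\int\tilde A\,\cR(M,F_n)\dd w$ and $\tfrac1{\eta_n}\int\cD v\,\dd v$ while $F_n$ is only locally bounded with the decay (b) is exactly the purpose of the weighted estimates (H4)--(H5) (weight $(1+|V|^2)^{-p}$, $p>3$). I expect the principal difficulty to be the simultaneous control of these limits against the scaling constraints: the first-order momentum transfer in (H2) scales like $\eta_n/\eps_n$ and the velocity dispersion in (H3) like $\eta_n^2/\eps_n^2$, so that after division by $\eta_n$ the drift and drag survive as $O(1)$ while the Taylor remainder is $O(\eta_n/\eps_n^2)$, negligible precisely when $\eta_n/\eps_n^2\to0$.
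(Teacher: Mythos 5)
Your proposal is correct and follows essentially the same route as the paper: the infinitesimal-Maxwellian limit via $\cL g_n\to0$, the Taylor expansion of the test function combined with (H1)--(H3) and the scaling $\eta_n/\eps_n^2\to0$ for the drag, the momentum identity (\ref{momDR}) to transfer it to the fluid equation, and the moment method with $\tilde A$ (viscosity from the transport term, convection from $\cQ(g)=\tfrac12\cL(g^2)$, vanishing of the friction flux by the rotation-invariance argument from (H4)--(H5)), finishing with divergence-free test fields to recover the pressure. No substantive differences from the paper's Steps 1--8.
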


\subsection{Proof of Theorem \ref{T-LimThm}}


\bigskip
The proof of Theorem \ref{T-LimThm} is based on the formal derivation of the incompressible fluid dynamic limit of the Boltzmann equation formulated in \cite{BGL1}. However, the interaction with the dust particles/droplets involves very serious
complications.

This proof is split in several steps, referred to as Propositions \ref{rhout} to \ref{last}, and a final part in which all the convergences of the different terms appearing in eq. (\ref{kifu}) are established.

\subsubsection{Step 1: Asymptotic form of the molecular distribution function.}


\begin{proposition}\label{rhout}
Under the assumptions of Theorem \ref{T-LimThm}, there exist $L^{\infty}$ functions $\rho\equiv\rho(t,x)\in\bR$ and $\th\equiv\th(t,x)\in\bR$, and a $L^\infty$ vector field $u\equiv u(t,x)\in\bR^3$ s.t. (\ref{eqr}) holds.
\end{proposition}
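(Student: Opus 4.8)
The plan is to treat this (the content of Step~1) as the two-species analogue of the classical identification of the limiting fluctuation with an infinitesimal Maxwellian in the incompressible Boltzmann limit. First I would substitute the ansatz (\ref{kifu2}), $f_n=M(1+\eps_n g_n)$, into the second equation of (\ref{kifu}) and expand the self-collision term. Since $\cC$ is quadratic with $\cC(M)=0$ and $\cL$ is its linearization (\ref{defL}), the exact expansion reads $\cC(f_n)=-\eps_n M\cL g_n+\eps_n^2 M\cQ(g_n)$, where $\cQ(g_n):=M^{-1}\cC(Mg_n)$ is the Maxwellian-weighted nonlinear part. Dividing by $M$ and isolating the singular term, the kinetic equation becomes
$$\tfrac1{\eps_n}\cL g_n=-\eps_n\d_t g_n-w\cdot\grad_x g_n+M^{-1}\cR(f_n,F_n)+\cQ(g_n),$$
whose right-hand side is formally $O(1)$; thus $\cL g_n=O(\eps_n)$ and the limit $g$ should belong to $\Ker\cL$.

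To make this rigorous I would pass to the limit weakly. Fix a test function $\phi\equiv\phi(w)\in\Dom\cL$ that is smooth with at most polynomial growth (so that $\cL\phi\in L^2(M\dd w)$) and $\zeta\equiv\zeta(t,x)\in C_c^1(\bR_+^*\times\bR^3)$. Pairing the displayed identity with $M\phi\zeta$, integrating in $(t,x,w)$, multiplying by $\eps_n$, and using the self-adjointness of $\cL$ (Theorem \ref{T-HilbGr}) to transfer $\cL$ onto $\phi$, I obtain
$$\iiint M\,g_n\,(\cL\phi)\,\zeta\,\dd w\,\dd x\,\dd t=\eps_n R_n,$$
where $R_n$ gathers the contributions of the right-hand side tested against $M\phi\zeta$. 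Granting a uniform bound $|R_n|\le C_{\phi,\zeta}$, the right-hand side vanishes as $n\to\infty$, while $g_n\wto g$ in $L^2_{loc}$ tested against $M(\cL\phi)\zeta\in L^2_{loc}$ makes the left-hand side converge to $\iiint Mg(\cL\phi)\zeta$. Hence $\langle g,\cL\phi\rangle=0$ for all such $\phi$; as these $\cL\phi$ are dense in $\Img\cL$ and $\Img\cL=(\Ker\cL)^\bot$ by the Fredholm property (Theorem \ref{T-HilbGr}), I conclude $g(t,x,\cdot)\in\Ker\cL$ for a.e.\ $(t,x)$.

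The crux, and the step I expect to be the main obstacle, is the uniform bound on $R_n$, which must absorb the factor $\eps_n$ despite the merely weak convergence of $g_n$. The time and transport contributions I would integrate by parts in $t$ and $x$ to move derivatives onto $\zeta$, after which they are controlled by the local $L^2(M\dd w)$ bound on $g_n$ from assumption (c). The self-collision remainder $\cQ(g_n)$, being quadratic in $g_n$, I would estimate using the hard-potential cutoff growth (\ref{cc4}) on $c$ together with (c), bounding it by the $L^1_{loc}$ norm of $\|g_n(t,x,\cdot)\|_{L^2(M\dd w)}^2$ on the support of $\zeta$. The genuinely two-species term $M^{-1}\cR(f_n,F_n)$ is the delicate one: expanding $\cR$ via (\ref{newr}) and inserting $f_n=M(1+\eps_n g_n)$, I would bound its gain part using assumption (H5)---which is designed precisely to control $\iint\Phi(w,W)\Pi_{gp}^{\eps,\eta}\,\dd w$ by $\|h\|_{L^2(M\dd w)}$---together with the uniform pointwise decay of $F_n$ from assumption (b), the loss part being handled analogously through (H1). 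Collecting these estimates yields the required uniform bound on $R_n$.

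Finally, to reach (\ref{eqr}) I would use $\Ker\cL=\Span\{1,w_1,w_2,w_3,|w|^2\}$ from Theorem \ref{T-HilbGr}, rewritten in the $L^2(M\dd w)$-orthogonal basis $\{1,w_1,w_2,w_3,\tfrac12(|w|^2-3)\}$. The coefficients are then recovered as the velocity moments
$$\rho=\int g\,M\,\dd w,\qquad u=\int g\,w\,M\,\dd w,\qquad\th=\tfrac13\int g\,(|w|^2-3)\,M\,\dd w,$$
which are measurable functions of $(t,x)$, yielding the decomposition (\ref{eqr}); their regularity follows from the uniform control of $g_n$ in assumptions (b)--(c) carried over to the limit $g$ (using weak lower semicontinuity for the $L^2$ bound), which underlies the stated $L^\infty$ regularity of $\rho,u,\th$.
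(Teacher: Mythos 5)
Your proposal is correct and follows essentially the same route as the paper: exact quadratic expansion of $\cC$ around $M$, multiplication of the fluctuation equation by $\eps_n$ (equivalently $\eps_n^2$), uniform control of the transport, $\cQ(g_n)$ and $\cR$ contributions via assumption (c), the Cauchy--Schwarz inequality with the growth bound (\ref{cc4}), and (H1) together with assumption (b), followed by self-adjointness of $\cL$ to pass to the weak limit and conclude $g\in\Ker\cL$, hence (\ref{eqr}). The only cosmetic differences are that the paper estimates each term of the right-hand side separately against $\phi\in C_c(\bR^3)$ rather than packaging them into a single remainder $R_n$, and it invokes (H1) with assumption (b) (rather than (H5)) to control the $\cR$ term.
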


\begin{proof} Since $\cC$ is a quadratic operator, its Taylor expansion terminates at order $2$, i.e.
$$
\ba
\cC(M(1+\eps_ng_n))=&\cC(M)+\eps_nD\cC(M)\cdot(Mg_n)+\eps_n^2\cC(Mg_n)
\\
=&-\eps_nM\cL g_n+\eps_n^2M\cQ(g_n),
\ea
$$
where $\cL\phi$ is defined by (\ref{defL}) and
\be \label{defQ}
\cQ(\phi):=M^{-1}\cC(M\phi).
\ee
Then the kinetic equation for the propellant (second line of eq. (\ref{kifu})) can be recast in terms of the fluctuation of the distribution function $g_n$:
\be\label{eqg}
\d_tg_n+\frac1{\eps_n}w\cdot\grad_xg_n+\frac1{\eps_n^2}\cL g_n=\frac1{\eps_n}M^{-1}\cR(M(1+\eps g_n),F_n)+\frac1{\eps_n}\cQ(g_n)\,.
\ee
Multiplying each side of this equation by $\eps_n^2$ leads to the equality
\be \label{eqgpr}
\cL g_n=\eps_n(M^{-1}\cR(M(1+\eps g_n),F_n)+\cQ(g_n))-\eps_n^2\d_tg_n-\eps_nw\cdot\grad_xg_n .
\ee
The two last terms of this identity clearly converge to $0$ in the sense of distributions since $g_n \wto g$ weakly in $L^2_{loc}$. 

Next, for each test function $\phi \in C_c(\bR^3)$,
$$ 
\ba
\int_{\bR^3}\cQ(g_n)(w)\phi(w)\,\dd w=\iiint_{\bR^3\times\bR^3\times\bS^2}\left(M^{-1}(w')\phi(w')-M^{-1}(w)\phi(w)\right)&
\\
\times M(w_*)g_n(w_*)M(w)g_n(w) c(w - w_*, \om)\,\dd\om\dd w_*\dd w&\,,
\ea
$$
where $w',w'_*$ are defined by (\ref{cc2}). By the Cauchy-Schwarz inequality,
$$ 
\ba 
\left|\int_{\bR^3}\cQ(g_n)\phi(w)\,\dd w\right|&
\\
\le C\iint_{\bR^3\times\bR^3}M(w_*)g_n(w_*)M(w)g_n(w)(1+|w|+|w_*|)\,\dd w_*\dd w&
\\
\le C \int_{\bR^3}M(w)g_n(w)^2\,\dd w\int_{\bR^3}M(w)(1+|w|)^2\,\dd w&\,,
\ea
$$
so that
$$
\int_{\bR^3}\cQ(g_n)\phi(w)\,\dd w\hbox{ is bounded in }L^1_{loc}(\bR_+^*\times\bR^3)
$$ 
for each $\phi \in C_c(\bR^3)$, and $\eps_n \cQ(g_n)\to 0$ in the sense of distributions. 

Likewise, for each $\phi \in C_c(\bR^3)$, we deduce from (H1) that
$$
\ba 
\int_{\bR^3}\cR(f_n,F_n)M^{-1}(w)\phi(w)\,\dd w&
\\
=\iiint_{\bR^3\times\bR^3\times\bR^3}(M^{-1}(w)\phi(w)-M^{-1}(W)\phi(W))f_n(W)F_n(V)\Pi_{gp}(w,\dd V\dd W)\,\dd w&\,,
\ea
$$
so that 
$$
\ba
\left|\int_{\bR^3}\cR(f_n,F_n)M^{-1}(w)\phi(w)\,\dd w\right|
\\
\le C\iint_{\bR^3\times\bR^3}F_n(V)f_n(W)q(|\eps_n V - W|)\,\dd V\dd W
\\
\le C\int_{\bR^3}M(W)(1+\eps_ng_n)(W)(1+|W|)\,\dd W\,,  
\ea
$$
which is bounded in $L^2_{loc}(\bR_+^* \times \bR^3)$, according to (H1) and assumption (b) in Theorem \ref{T-LimThm}. Therefore $\eps_n\cR(f_n,F_n)M^{-1}(w)\to 0$ in the sense of distributions.

Finally, for each test function $\phi\in C_c(\bR^3)$, one has\footnote{We use the notation
$$
(\phi|\psi)_{L^2(M,\dd v)}:=\int_{\bR^3}\overline{\phi(v)}\psi(v)M(v)\,\dd v\,,\quad\hbox{ for each }\phi,\psi\in L^2(M\,\dd v)\,.
$$}: 
$$
(\cL g_n|\phi)_{L^2(M\,\dd v)}=(g_n|\cL\phi)_{L^2(M\,\dd v)}\to(g|\cL\phi)_{L^2(M\,\dd v)}=(\cL g|\phi)_{L^2(M\,\dd v)}
$$
since $\cL\phi\in L^2(M\,\dd v)$ and $g_n\wto g$ in $L^2_{loc}(\bR_+^*\times\bR^3\times\bR^3)$ weak. Hence
$$
\cL g_n\to\cL g=0\hbox{ in the sense of distributions.}
$$
According to (\ref{null}), $g$ is of the form (\ref{eqr}).
\end{proof}

\subsubsection{Step 2: Asymptotic deflection term.}


The following proposition is the key observation in this work. Because the mass ratio of the gas molecules to the particles in the dispersed phase is assumed to be small, the heavier particles are only slightly deflected upon colliding 
with the lighter gas molecules. It explains how the collision integral $\cD(F,f)$ in the kinetic equation for the distribution function of the dispersed phase converges to the acceleration term which appears in the Vlasov equation. This
result is reminiscent of Theorem 4.3 in \cite{DegoLucq}.

\begin{proposition}\label{defl}
Under the assumptions of Theorem \ref{T-LimThm},
$$
\frac1{\eta_n}\cD(F_n,f_n)\to\ka\Div((v-u)F)\quad\hbox{ in }\cD'(\bR_+^*\times\bR^3\times\bR^3)\,,
$$
with $\ka$ defined in (\ref{nuka}). More precisely, for each $\phi\equiv\phi(v)\in C^2(\bR^3)$ such that $\grad\phi$ and $\grad^2\phi\in L^\infty(\bR^3)$, one has
$$
-\frac1{\eta_n}\int_{\bR^3}\cD(F_n,f_n)\phi(v)\,\dd v\to\ka\int_{\bR^3}F(v)\nabla\phi(v)\cdot(v-u)\,\dd v
$$ 
in $\cD'(\bR_+^*\times\bR^3)$.
\end{proposition}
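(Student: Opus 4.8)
The plan is to pass to the limit in the weak formulation of $\tfrac1{\eta_n}\cD(F_n,f_n)$ tested against a fixed velocity test function $\phi$, using the moment identities (H1)--(H3) to expose the drag structure and the strong convergence of velocity averages (d) to handle the coupling between the two phases. First I would write out $\int\cD(F_n,f_n)\phi\,\dd v$ from (\ref{newd}). Exchanging the order of integration in the gain term and using (H1) --- which gives $\int\Pi_{pg}(v,\dd V\dd W)\,\dd v=q(|\eps_n V-W|)\,\dd V\dd W$ and forces the loss kernel $|\eps_n v-w|\Si_{pg}$ to coincide with $q$ --- the pairing collapses to
\[
\int_{\bR^3}\cD(F_n,f_n)\phi\,\dd v=\iint_{\bR^3\times\bR^3} F_n(V)f_n(W)\Big(\int_{\bR^3}(\phi(v)-\phi(V))\,\Pi_{pg}(v,\dd V\dd W)\Big),
\]
where the inner $v$-integral produces a signed measure in $(V,W)$ paired against $F_n(V)f_n(W)\,\dd V\dd W$. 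I would then Taylor expand $\phi(v)-\phi(V)=\grad\phi(V)\cdot(v-V)+O(\|\grad^2\phi\|_\infty|v-V|^2)$ and denote by $T_1$ and $T_2$ the corresponding first-order and remainder contributions to $-\tfrac1{\eta_n}\int\cD(F_n,f_n)\phi\,\dd v$.

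\emph{The first-order term.} For $T_1$ I would insert the momentum identity (H2), $\int(v-V)\Pi_{pg}\,\dd v=-\tfrac{\eta_n}{\eps_n(1+\eta_n)}(\eps_n V-W)Q(|\eps_n V-W|)\,\dd V\dd W$, so that the factor $\eta_n$ cancels and
\[
T_1=\tfrac1{\eps_n(1+\eta_n)}\iint F_n(V)f_n(W)\,\grad\phi(V)\cdot(\eps_n V-W)Q(|\eps_n V-W|)\,\dd V\dd W.
\]
Substituting $f_n=M(1+\eps_ng_n)$ splits this into an equilibrium part (with $M$) and a fluctuation part (with $\eps_n M g_n$). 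In the equilibrium part the prefactor $1/\eps_n$ looks singular; I would resolve it by the change of variables $W\mapsto W-\eps_n V$, which transfers the $\eps_n$-dependence onto the smooth Gaussian: the leading term $\tfrac1{\eps_n}\int M(W)(-W)Q(|W|)\,\dd W$ vanishes by oddness in $W$, and the next order (coming from $\grad M=-WM$) produces exactly $\ka V$, using $\tfrac13\int|W|^2Q(|W|)M\,\dd W=\ka$. By weak-$*$ convergence of $F_n$, with the polynomially growing weight controlled by assumption (b), the equilibrium part converges to $\ka\int F\,\grad\phi\cdot v\,\dd v$. In the fluctuation part the explicit $\eps_n$ cancels $1/\eps_n$; as $\eps_n\to0$ the kernel $(\eps_n V-W)Q(|\eps_n V-W|)\to-WQ(|W|)$ decouples $V$ and $W$, and the explicit form (\ref{eqr}) of $g$ gives $\int M g(-W)Q(|W|)\,\dd W=-\ka u$, so this part tends to $-\ka\int F\,\grad\phi\cdot u\,\dd v$. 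Together, $T_1\to\ka\int F\,\grad\phi\cdot(v-u)\,\dd v$.

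\emph{The remainder.} The second-order remainder $T_2$ is where the scaling hypothesis enters. Writing $\eps_n(v-V)=(\eps_n v-\tfrac{\eps_n V+\eta_n W}{1+\eta_n})-\tfrac{\eta_n}{1+\eta_n}(\eps_n V-W)$ and invoking (H3) together with (H1) yields $\int|v-V|^2\Pi_{pg}\,\dd v\le C\tfrac{\eta_n^2}{\eps_n^2}(1+|\eps_n V-W|^2)q(|\eps_n V-W|)\,\dd V\dd W$, so that $T_2$ is bounded in absolute value by $C\tfrac{\eta_n}{\eps_n^2}\|\grad^2\phi\|_\infty$ times a moment integral which is bounded uniformly in $n$ (using the decay (b) of $F_n$ and Cauchy--Schwarz for $g_n$ against the Gaussian). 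This vanishes precisely because $\eta_n/\eps_n^2\to0$, which is the role of the distinguished scaling assumption.

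\emph{The main obstacle.} I expect the genuinely delicate point to be the fluctuation part of $T_1$: it is the integral of a \emph{product} of the two weakly converging sequences $F_n$ and $g_n$, coupled through the collision kernel, and a product of weak limits need not converge to the product of the limits. This is exactly where hypothesis (d) is indispensable. After the decoupling $\eps_n\to0$, the $W$-integral becomes a velocity average $\int M g_n(-W)Q(|W|)\,\dd W$, which by (d) converges \emph{strongly} in $L^2_{loc}$ (the weight $-WQ(|W|)$ being approximable in $L^2(M\,\dd W)$, since $\int M|W|^2Q^2\,\dd W<\infty$), while $\int F_n\,\grad\phi\,\dd V$ converges weak-$*$ in $L^\infty_{loc}$; the pairing of a weak-$*$ $L^\infty$ limit against a strong $L^2$ limit does pass to the product. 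The residual $O(\eps_n)$ coupling error left in the kernel for finite $\eps_n$ is harmless, bounded again via the polynomial decay (b) of $F_n$ and Cauchy--Schwarz against the Gaussian for $g_n$. Assembling the three contributions gives $-\tfrac1{\eta_n}\int\cD(F_n,f_n)\phi\,\dd v\to\ka\int F\,\grad\phi\cdot(v-u)\,\dd v$, i.e. $\tfrac1{\eta_n}\cD(F_n,f_n)\to\ka\Div((v-u)F)$ in $\cD'$.
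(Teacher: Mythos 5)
Your proposal is correct and follows essentially the same route as the paper: (H1) plus a second-order Taylor expansion of $\phi$, (H2) to expose the drag kernel in the first-order term, the split $f_n=M(1+\eps_n g_n)$ with assumption (d) resolving the product of the two weakly converging sequences, and (H3) combined with $\eta_n/\eps_n^2\to 0$ killing the remainder. The only (cosmetic) difference is in the equilibrium part of the first-order term, where you shift the $W$-variable and Taylor-expand the Maxwellian using $\grad M=-WM$, whereas the paper Taylor-expands $Q(|\eps_n V-W|)$ in $\eps_n$ and recombines the two resulting contributions via the identity $\int M|W|^2Q\,\dd W=\int M(3Q+|W|Q')\,\dd W$ --- equivalent computations yielding the same coefficient $\ka$.
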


\begin{proof}
Using (H1) or (\ref{Colli2}) and the Taylor expansion at order $2$ for the $C^2$ function $\phi$, one has
$$
\ba
\frac1\eta_n\int_{\bR^3}\cD(F_n,f_n)(v)\phi(v)\,\dd v&
\\
=\frac1\eta_n\iint_{\bR^3\times\bR^3}F_n(V)f_n(W)\int_{\bR^3}(\phi(v)-\phi(V))\Pi_{pg}(v,\dd V\dd W)\,\dd v&
\\
=\frac1\eta_n\iint_{\bR^3\times\bR^3}F_n(V)f_n(W)\grad\phi(V)\cdot\int(v-V)\Pi_{pg}(v,\dd V\dd W)\,\dd v&
\\
+\frac1\eta_n\iint_{\bR^3\times\bR^3}F_n(V)f_n(W)\int H(v,V):(v-V)^{\otimes 2}\Pi_{pg}(v,\dd V\dd W)\,\dd v&
\\
=: I_n+J_n&\,,
\ea
$$
where
$$
H(v,V):=\int_0^1(1-t)\grad^2\phi((1-t)V+tv)\,\dd t\,.
$$

\medskip
We first treat the term $I_n$. According to (H2)
$$
I_n=-\int_{\bR^3}F_n(V)\grad\phi(V)\cdot \frac{K_n(V)}{1+\eta_n}\,\dd V\,,
$$
where
$$ 
K_n(V):=\frac1{\eps_n}\int_{\bR^3}f_n(W)(\eps_nV-W)Q(|\eps_nV-W|)\,\dd W\,. 
$$
Hence
$$
I_n=I_n^1+I_n^2+I_n^3+I_n^4+I_n^5\,,
$$
with
$$
\ba
{}&I_n^1=-\eps_n\int_{\bR^3}F_n(V)\frac{\grad\phi(V)}{1+\eta_n}\cdot\int_{\bR^3}M(W)g_n(W)VQ(|\eps_n V-W|)\,\dd W\dd V\,,
\\
&I_n^2=\int_{\bR^3}F_n(V)\frac{\grad\phi(V)}{1+\eta_n}\cdot\int_{\bR^3}M(W)g_n(W)W(Q(|\eps_n V-W|) - Q(|W|)\,\dd W\,,
\\
&I_n^3=\int_{\bR^3}F_n(V)\frac{\grad\phi(V)}{1+\eta_n}\cdot\int_{\bR^3}M(W)g_n(W)WQ(|W|)\,\dd W\dd V\,,
\\
&I_n^4=-\int_{\bR^3}F_n(V)\frac{\grad\phi(V)}{1+\eta_n}\cdot\int_{\bR^3}M(W)VQ(|\eps_n V-W|)\,\dd W\dd V\,,
\\
&I_n^5=\frac1{\eps_n}\int F_n(V)\frac{\grad\phi(V)}{1+\eta_n}\cdot\int M(W)W(Q(|\eps_n V-W|)-Q(|W|))\,\dd W\dd V\,.
\ea
$$

Notice that
$$
\ba
\left|\int_{\bR^3}M(W)g_n(W)VQ(|\eps_n V-W|)\,\dd W\right| 
\\
\le C\int_{\bR^3}(1+\eps_n|V|+|W|)M(W)|g_n(W)||V|\,\dd W 
\\
\le C|V|(1+|V|)\sqrt{\int_{\bR^3}M g_n^2\,\dd W}
\ea
$$ 
by the Cauchy-Schwarz inequality, so that
$$ 
I_n^1\to 0\quad\hbox{ in }L^2_{loc}(\bR_*^+ \times \bR^3)\,. 
$$
Then, 
$$
\ba
\left|\int_{\bR^3}M(W)g_n(W)W(Q(|\eps_n V-W|) - Q(|W|))\,\dd W\right| 
\\
\le C\eps_n \int_{\bR^3}M(W)|g_n(W)||W||V|(1+\eps_n|V|+|W|)\,\dd W
\\
\le C\eps_n(1+|V|^2)\sqrt{\int_{\bR^3}M(W)g_n^2\,\dd W}\,,
\ea
$$ 
so that
$$
I_n^2\to 0\quad\hbox{ in }L^2_{loc}(\bR_*^+ \times \bR^3)\,. 
$$
By assumption (d) in Theorem \ref{T-LimThm}
$$
\ba
\int_{\bR^3}M(W)g_n(W)WQ(|W|)\,\dd W\to&\int_{\bR^3}M(W)g(W)WQ(|W|)\,\dd W
\\
&=\tfrac13u\int_{\bR^3}M(W)|W|^2Q(|W|)\,\dd W=\ka u
\ea
$$
in $L^1_{loc}(\bR_*^+\times \bR^3)$, and therefore
$$ 
I_n^3\to\ka u\cdot\int_{\bR^3}F(V)\grad\phi(V)\,\dd V\quad\hbox{ in }\cD'(\bR_*^+ \times \bR^3)\,.
$$
Then
$$
\ba
\left|\int_{\bR^3}M(W)V(Q(|\eps_n V-W|) - Q(|W|))\,\dd W\right|&
\\
\le C \eps_n\int_{\bR^3}M(W)|V|^2(1+\eps_n|V|+|W|)\,\dd W&
\\
\le C\eps_n|V|^2(1+|V|)&\,,
\ea
$$
so that
$$
\int_{\bR^3}F_n(V)\frac{\grad\phi(V)}{1+\eta_n}\cdot\int_{\bR^3}M(W)V(Q(|\eps_n V-W|) - Q(|W|))\,\dd W\dd V\to  0
$$
locally uniformly on $\bR_*^+\times\bR^3$, and
$$
I_n^4\to-\int_{\bR^3}F(V)\grad\phi(V)\cdot V\int_{\bR^3}M(W)Q(|W|)\,\dd W\dd V\hbox{ in }\cD'(\bR_*^+ \times \bR^3)\,.
$$ 
Finally 
$$ 
\ba
\left|\int_{\bR^3}M(W)W\left(\frac{Q(|\eps_n V-W|)-Q(|W|)}{\eps_n}+\frac{W}{|W|}\cdot VQ'(|W|)\right)\,\dd W\right| 
\\
\le\int_{\bR^3}M(W)|W||V|\int_0^1|Q'(|\theta \eps_n V-W|)-Q'(|W|)|\,\dd\theta\dd W
\\
\le C|V|(1+|V|)
\ea
$$
and
$$
\int_{\bR^3}M(W)|W||V|\int_0^1|Q'(|\theta \eps_n V-W|)-Q'(|W|)|\,\dd\theta\dd W\to 0
$$
for all $V\in\bR^3$ by dominated convergence. With assumption (b) in Theorem \ref{T-LimThm}, we see that
$$
I_n^5+\int_{\bR^3}F_n(V)\frac{\grad\phi(V)}{1+\eta_n}\cdot\int_{\bR^3}M(W)\frac{W}{|W|} W\cdot V Q'(|W|)\,\dd W\dd V\to 0
$$
locally uniformly on $\bR_*^+ \times \bR^3$, and therefore
$$
I_n^5\to-\int_{\bR^3}F(V)\grad\phi(V)\cdot\int_{\bR^3}M(W)\frac{W}{|W|} W\cdot V Q'(|W|)\,\dd W\dd V\hbox{ in }\cD'(\bR_*^+ \times \bR^3)\,.
$$
By isotropy, one has
$$
\int_{\bR^3}M(W)\frac{W}{|W|} W\cdot V Q'(|W|)\,\dd W=\tfrac13V\int_{\bR^3}M(W)|W|Q'(|W|)\,\dd W\,,
$$
so that
$$
I_n^4+I_n^5\to-\int_{\bR^3}F(V)\grad\phi(V)\cdot V\int_{\bR^3}M(W)(Q(|W|)+\tfrac13|W|Q'(|W|))\,\dd W\dd V
$$
in $\cD'(\bR_*^+ \times \bR^3)$. On the other hand, we observe that
$$
W\cdot\grad M(W)=-|W|^2M(W)
$$
so that
$$
\ba
\int_{\bR^3}M(W)|W|^2Q(|W|)\,\dd W=&-\int_{\bR^3}W\cdot\grad M(W)Q(|W|)\,\dd W
\\
=&\int_{\bR^3}M(W)\Div(WQ(|W|))\,\dd W
\\
=&\int_{\bR^3}M(W)(3Q+|W|Q')(|W|)\,\dd W\,.
\ea
$$
Hence
$$
\ba
I_n^4+I_n^5\to&-\int_{\bR^3}F(V)\grad\phi(V)\cdot V\int_{\bR^3}\tfrac13|W|^2M(W)Q(|W|)\,\dd W\dd V
\\
=&-\ka\int_{\bR^3}F(V)\grad\phi(V)\cdot V\,\dd V
\ea
$$
in $\cD'(\bR_*^+ \times \bR^3)$. Therefore
$$
I_n\to-\ka\int_{\bR^3}F(V)\grad\phi(V)\cdot(V-u)\,\dd V\quad\hbox{ in }\cD'(\bR_*^+ \times \bR^3)\,,
$$
with
$$
\ka=\tfrac13\int_{\bR^3}M(W)Q(|W|)|W|^2\,\dd W>0\,.
$$

\medskip
Next we treat the term $J_n$. One has
$$
|J_n|\le\frac1{2\eta_n}\|\grad^2\phi\|_{L^\infty}\iint_{\bR^3\times\bR^3}F_n(V)f_n(W)\int_{\bR^3}|v-V|^2\Pi_{pg}(v,\dd V\dd W)\,\dd v\,.
$$
With $U=\frac{\eps_nV+\eta_n W}{1+\eta_n}$, one has 
$$
\ba
\eps_n^2|v-V|^2&\le 2|\eps_nv-U|^2+2|U-\eps_nV|^2
\\
&=2|\eps_nv-U|^2+\frac{2\eta_n^2}{(1+\eta_n)^2}|\eps_nV-W|^2\,.
\ea
$$
According to assumption (H3),
$$
\int_{\bR^3}|v-V|^2\Pi_{pg}(v,\dd V\dd W)\,\dd v\le\frac{2C}{\eps^2_n}\eta_n^2(1+|\eps_nV-W|^2)q(|\eps_nV-W|)\,,
$$
so that
$$
|J_n|\le\frac{C}{\eps_n^2}\eta_n||\nabla^2 \phi||_{L^{\infty}}\iint_{\bR^3\times\bR^3}F_n(V)f_n(W)(1+|\eps_nV-W|^2)q(|\eps_nV-W|)\,\dd V\dd W\,.
$$
By (H1) and assumption (b) in Theorem \ref{T-LimThm}, 
$$
\ba
\iint_{\bR^3\times\bR^3}F_n(V)f_n(W)(1+|\eps_nV-W|)^2q(|\eps_nV-W|)\,\dd V\dd W
\\
\le CC_R\iint_{\bR^3\times\bR^3}\frac{(1+\eps_n|V|+|W|)^3}{(1+|V|)^p}M(W)(1+\eps_ng_n)(W)\,\dd V\dd W
\ea
$$
for $(t,x)\in[0,R]\times[-R,R]^3$. By assumption (c) in Theorem \ref{T-LimThm} and the Cauchy-Schwarz inequality, the right hand side is bounded in $L^2_{loc}(\bR_*^+\times\bR^3)$. Hence
$$
J_n\to 0\quad\hbox{ in }L^2_{loc}(\bR_*^+\times\bR^3)
$$
since $\eta_n/\eps_n^2\to 0$, which concludes the proof of Proposition \ref{defl}.
\end{proof}

\subsubsection{Step 3: Asymptotic friction term.}


\begin{proposition}\label{fric}
Under the assumptions of Theorem \ref{T-LimThm},
$$
\frac1{\eps_n}\int_{\bR^3}w\cR(f_n,F_n)\,\dd w\to\ka\int_{\bR^3}(v-u)F\,\dd v\quad\hbox{ in }\cD'(\bR_+^*\times\bR^3),
$$
with $\ka$ defined by formula (\ref{nuka}). 
\end{proposition}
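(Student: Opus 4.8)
The plan is to obtain Proposition \ref{fric} as a corollary of the deflection estimate of Proposition \ref{defl}, using the exact momentum balance between the two cross-collision operators. The key identity is the dimensionless conservation of momentum (\ref{momDR}),
$$
\eps_n\int_{\bR^3}\cD(F_n,f_n)(v)\,v\,\dd v+\eta_n\int_{\bR^3}\cR(f_n,F_n)(w)\,w\,\dd w=0,
$$
which is precisely what Assumption (H2) encodes. Dividing by $\eps_n\eta_n$ turns the friction integral into (minus) the first moment of the deflection operator:
$$
\frac1{\eps_n}\int_{\bR^3}w\,\cR(f_n,F_n)\,\dd w=-\frac1{\eta_n}\int_{\bR^3}v\,\cD(F_n,f_n)\,\dd v.
$$
Hence it suffices to run Proposition \ref{defl} with the linear test function $\phi(v)=v$.

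I would then note that $\phi(v)=v_i$ is an admissible choice in Proposition \ref{defl} for each component $i$: although $\phi$ is unbounded, one has $\grad\phi=e_i\in L^\infty(\bR^3)$ and $\grad^2\phi=0\in L^\infty(\bR^3)$, so its hypotheses hold. This is in fact the most favourable instance of that proposition, since the quadratic remainder $J_n$ vanishes identically and only the first-moment term $I_n$ survives. Its conclusion then reads
$$
-\frac1{\eta_n}\int_{\bR^3}v_i\,\cD(F_n,f_n)\,\dd v\to\ka\int_{\bR^3}F(v)\,(v-u)_i\,\dd v\quad\hbox{ in }\cD'(\bR_+^*\times\bR^3),
$$
with $\ka$ as in (\ref{nuka}). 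Combining this with the momentum balance above and assembling the three components gives exactly the asserted convergence.

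The only genuine point to verify is that (\ref{momDR}) is a legitimate pointwise-in-$(t,x)$ identity between absolutely convergent integrals rather than a merely formal one. Here I would use the decay hypothesis (b) on $F_n$ together with the ansatz $f_n=M(1+\eps_ng_n)$ and assumption (c): the loss parts of $\cD$ and $\cR$ are controlled through $|\eps_nv-w|\Si_{pg}(|\eps_nv-w|)=q(|\eps_nv-w|)\le C(1+|\eps_nv-w|)$ from Assumption (H1), while the signed first moment of $\Pi^{\eps_n,\eta_n}_{pg}$ supplied by Assumption (H2) equals $-\tfrac{\eta_n}{(1+\eta_n)\eps_n}(\eps_nV-W)Q(|\eps_nV-W|)$ with $Q(r)\le C(1+r)$, which is integrable against $F_n(V)f_n(W)$. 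This makes every integral in sight absolutely convergent, so the reduction is valid.

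Should a self-contained proof be preferred, the same result follows by computing directly: Assumption (H2) gives
$$
\frac1{\eps_n}\int_{\bR^3}w\,\cR(f_n,F_n)\,\dd w=\frac1{1+\eta_n}\iint_{\bR^3\times\bR^3}F_n(V)f_n(W)\,\frac{\eps_nV-W}{\eps_n}\,Q(|\eps_nV-W|)\,\dd V\dd W,
$$
and one then inserts $f_n=M(1+\eps_ng_n)$ and splits this exactly along the decomposition $I_n=I_n^1+\dots+I_n^5$ used in Proposition \ref{defl}. The terms carrying an explicit factor $\eps_n$ or the fluctuation $g_n$ tend to $0$, the velocity average $\int_{\bR^3}M(W)g_n(W)WQ(|W|)\,\dd W\to\ka u$ is produced via assumption (d), and a first-order Taylor expansion of $Q$ together with the isotropy identity $\int_{\bR^3}M(W)\tfrac{W}{|W|}(W\cdot V)Q'(|W|)\,\dd W=\tfrac13V\int_{\bR^3}M(W)|W|Q'(|W|)\,\dd W$ yields the constant $\ka=\tfrac13\int_{\bR^3}M(W)|W|^2Q(|W|)\,\dd W$. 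The main obstacle along this direct route --- and the reason the reduction via momentum conservation is cleaner --- is the apparent $1/\eps_n$ singularity in the integrand, whose cancellation is exactly the delicate point already resolved inside Proposition \ref{defl}.
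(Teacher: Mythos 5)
Your proposal is correct and follows essentially the same route as the paper: the momentum balance encoded in (H2) converts the friction term into $-\tfrac1{\eta_n}\int_{\bR^3}v\,\cD(F_n,f_n)\,\dd v$, and Proposition \ref{defl} applied with the linear test function $\phi(v)=v$ then yields the conclusion. Your additional remarks on the admissibility of $\phi(v)=v$ and on the absolute convergence of the integrals are sound but not developed further in the paper's own proof.
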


\begin{proof}
By assumptions (H1)-(H2),
$$
\ba
\frac1{\eps_n}\int_{\bR^3}w\cR(f_n,F_n)\,\dd w&
\\
=\frac1{\eps_n}\iiint_{\bR^3\times\bR^3\times\bR^3}(w-W)f_n(W)F_n(V)\Pi_{gp}(w,\dd V\dd W)\,\dd w&
\\
=-\frac1{\eta_n}\iiint_{\bR^3\times\bR^3\times\bR^3}(v-V)F_n(V)f_n(W)\Pi_{pg}(v,\dd V\dd W)\,\dd v&
\\
=-\frac1{\eta_n}\int_{\bR^3}v\cD(F_n,f_n)\,\dd v&\,.
\ea
$$
Proposition \ref{defl} then implies that
\be\lb{WeakLimDefl}
-\frac1{\eta_n}\int_{\bR^3}\phi(v)\cD(F_n,f_n)\,\dd v\to\ka\int_{\bR^3}F(V)\grad\phi(V)\cdot (V-u)\,\dd V
\ee
in $\cD'(\bR_+^*\times\bR^3)$ for each test function $\phi\equiv\phi(v)$ satisfying
$$
\phi\in C^2(\bR^3)\quad\hbox{ and }\grad\phi,\grad^2\phi\in L^\infty(\bR^3)\,.
$$
Setting $\phi(v)=v$ in (\ref{WeakLimDefl}) leads to the conclusion.
\end{proof}

\subsubsection{Step 4: Incompressibility condition.}


\begin{proposition}\label{bouss}
Under the assumptions of Theorem \ref{T-LimThm}, the velocity field $u$ satisfies the incompressibility condition
\be\label{incomp}
 \Div_xu=0
\ee
in the sense of distributions on $\bR_+^*\times\bR^3$.
\end{proposition}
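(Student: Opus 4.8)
The plan is to derive the incompressibility condition from the local conservation of mass for the gas molecules, in the spirit of the incompressible hydrodynamic limit of \cite{BGL1}. First I would integrate the second equation of (\ref{kifu}) in the variable $w\in\bR^3$. Since collisions between gas molecules conserve mass (\ref{star}) and collisions between molecules and particles conserve the number of molecules (\ref{starstar}), both collision integrals drop out when integrated in $w$, so that the local conservation of mass reads
$$
\d_t\int_{\bR^3}f_n\,\dd w+\frac1{\eps_n}\Div_x\int_{\bR^3}wf_n\,\dd w=0\,.
$$
Substituting $f_n=M(1+\eps_ng_n)$ and using $\int_{\bR^3}M\,\dd w=1$ together with $\int_{\bR^3}wM\,\dd w=0$ (by oddness of $w\mapsto wM(w)$), this identity becomes, after multiplication by $\eps_n$,
$$
\eps_n\,\d_t\int_{\bR^3}M g_n\,\dd w+\Div_x\int_{\bR^3}wM g_n\,\dd w=0\,.
$$

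It then remains to pass to the limit $n\to\infty$ in $\cD'(\bR_+^*\times\bR^3)$. For the time-derivative term, the Cauchy-Schwarz inequality and assumption (c) show that $\int_{\bR^3}M g_n\,\dd w$ is bounded in $L^2_{loc}(\bR_+^*\times\bR^3)$; hence $\eps_n\,\d_t\int_{\bR^3}M g_n\,\dd w\to 0$ in the sense of distributions since $\eps_n\to 0$. For the flux term, the main difficulty is that the moment $\int_{\bR^3}wM g_n\,\dd w$ pairs $g_n$ with the \emph{non-compactly supported} weight $w\mapsto wM(w)$, to which assumption (d) does not apply directly. I would remedy this by writing $w=w\chi_R(w)+w(1-\chi_R(w))$ with a cutoff $\chi_R\in C_c(\bR^3)$ equal to $1$ on the ball of radius $R$: the truncated moment converges to its limit by assumption (d) applied to $\phi=w_i\chi_R\in C_c(\bR^3)$, while the tail is controlled by Cauchy-Schwarz,
$$
\left|\int_{\bR^3}w_i(1-\chi_R)M g_n\,\dd w\right|\le\Big(\int_{\bR^3}|w|^2(1-\chi_R)^2M\,\dd w\Big)^{1/2}\Big(\int_{\bR^3}M g_n^2\,\dd w\Big)^{1/2}\,,
$$
whose first factor tends to $0$ as $R\to\infty$ uniformly in $n$, the second being bounded in $L^2_{loc}$ by assumption (c). This yields $\int_{\bR^3}wM g_n\,\dd w\to\int_{\bR^3}wM g\,\dd w$ in $\cD'(\bR_+^*\times\bR^3)$.

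Finally I would identify the limiting flux using the infinitesimal-Maxwellian form (\ref{eqr}) of $g$ established in Proposition \ref{rhout}: since $\int_{\bR^3}wM\,\dd w=0$, $\int_{\bR^3}w_iw_jM\,\dd w=\de_{ij}$, and $w\mapsto w\,\tfrac12(|w|^2-3)M(w)$ is odd, only the term $u\cdot w$ contributes and $\int_{\bR^3}wM g\,\dd w=u$. Passing to the limit in the scaled conservation law then gives $\Div_xu=0$ in $\cD'(\bR_+^*\times\bR^3)$, which is (\ref{incomp}). The step I expect to be the main obstacle is precisely the flux convergence, namely upgrading the velocity-average convergence of assumption (d) from compactly supported test functions to the linear moment $wM$; everything else follows directly from the conservation laws and the structure $f_n=M(1+\eps_ng_n)$.
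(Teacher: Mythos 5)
Your proposal is correct and follows essentially the same route as the paper: both derive the scaled local conservation of mass $\eps_n\d_t\la g_n\ra+\Div_x\la wg_n\ra=0$ from the gas equation, let the time-derivative term vanish as $\eps_n\to 0$, and identify $\la wg\ra=u$ via the infinitesimal Maxwellian form of $g$. The only (minor) difference is that you justify the convergence of the flux $\la wg_n\ra$ by truncation, assumption (d) and a Cauchy--Schwarz tail estimate, whereas the paper simply invokes the weak $L^2$ convergence of $g_n$ together with assumption (c); both arguments are valid.
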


\begin{proof}
For each $\phi:=\phi(w)\in L^1(M\,\dd v)$, we set
\be\label{deflara}
\la\phi\ra:=\int_{\bR^3}\phi(w)M(w)\,\dd w\,. 
\ee
Multiplying both sides of (\ref{eqg}) by $\eps_nM(w)$ and integrating in $w$ shows that
$$
\eps_n\d_t\la g_n\ra+\Div_x\la wg_n\ra=0\,.
$$
according to (\ref{star}). Since $g_n\wto g$ in $L^2(\bR_+^*\times\bR^3\times\bR^3)$ weak and satisfies assumption (c) in Theorem \ref{T-LimThm}, 
$$
\la g_n\ra\to\la g\ra\quad\hbox{ and }\la wg_n\ra\to\la wg\ra\hbox{ in }L^2_{loc}(\bR_+^*\times\bR^3)\hbox{ weak.}
$$
Hence
$$
\Div_x\la wg_n\ra=-\eps_n\d_t\la g_n\ra\to 0\hbox{ in }\cD'(\bR_+^*\times\bR^3),
$$
so that
$$
\Div_x\la wg\ra=0\,.
$$
According to Proposition \ref{rhout},  one has $\la wg\ra=u$, so that (\ref{incomp}) holds.
\end{proof}

\subsubsection{Step 5: Viscosity term.}


\begin{proposition}\label{visc}
Under the assumptions of Theorem \ref{T-LimThm},
$$
\la\tilde A(w)w\cdot\grad_xg\ra=\nu(\grad_xu+(\grad_xu)^T)\,,
$$
where $\tilde A$ is defined in (\ref{defAtilde}), and $\nu$ is defined in (\ref{nuka}). 
\end{proposition}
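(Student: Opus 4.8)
The plan is to combine the explicit form of the hydrodynamic field $g$ obtained in Proposition \ref{rhout} with a Gaussian moment computation, using the incompressibility condition of Proposition \ref{bouss} to discard a trace term. First I would insert $g=\rho+u\cdot w+\th\tfrac12(|w|^2-3)$ from (\ref{eqr}) and expand
$$
w\cdot\grad_xg=(w\cdot\grad_x\rho)+\sum_{i,j}w_iw_j\,\d_{x_i}u_j+(w\cdot\grad_x\th)\tfrac12(|w|^2-3)\,.
$$
Each entry $\tilde A_{kl}(w)=\a(|w|)\bigl(w_kw_l-\tfrac13|w|^2\de_{kl}\bigr)$ of $\tilde A$ is even in $w$, whereas both the $\rho$-term and the $\th$-term are odd in $w$; since $M$ is even, these two contributions vanish after integration against $\tilde A(w)M(w)$. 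The relevant moments converge because $\a\in L^\infty(\bR_+)$, so that $\a(|w|)(1+|w|)^4$ is integrable against $M$.

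Thus the whole statement reduces to evaluating the rank-four tensor $T_{klij}:=\la\tilde A_{kl}\,w_iw_j\ra$ and contracting it with $\d_{x_i}u_j$. Writing $w_iw_j=A_{ij}(w)+\tfrac13|w|^2\de_{ij}$ and using the isotropy of the radial weight $\a(|w|)$, one checks that $\la\tilde A_{kl}|w|^2\ra=0$, so that $T_{klij}=\la\tilde A_{kl}A_{ij}\ra$. This tensor is symmetric and traceless in the pair $(k,l)$ as well as in $(i,j)$, and it is isotropic (being built from the rotation-invariant weight $\a(|w|)M(w)$); hence it must be a scalar multiple of the symmetric traceless projector,
$$
T_{klij}=\mu\left(\de_{ki}\de_{lj}+\de_{kj}\de_{li}-\tfrac23\de_{kl}\de_{ij}\right)\,.
$$
Contracting over $(k,i)$ and $(l,j)$ gives $10\mu=\la\tilde A:A\ra=\la\tilde A:\cL\tilde A\ra$, whence $\mu=\nu$ by the definition (\ref{nuka}) of $\nu$ (recall $\cL\tilde A=A$ from (\ref{defAtilde})).

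Finally, contracting $T_{klij}$ against $\d_{x_i}u_j$ yields
$$
\la\tilde A_{kl}\,(w\cdot\grad_xg)\ra=\nu\left(\d_{x_k}u_l+\d_{x_l}u_k-\tfrac23\de_{kl}\Div_xu\right)\,,
$$
and the last term drops by the incompressibility condition $\Div_xu=0$ of Proposition \ref{bouss}. This is precisely the $(k,l)$ entry of $\nu(\grad_xu+(\grad_xu)^T)$, which is the asserted identity. I expect the only genuinely delicate step to be the identification of the isotropic tensor $T$ together with the matching of its normalization constant to $\nu$; the parity argument and the Gaussian moment identities are routine once the boundedness of $\a$ guarantees the needed integrability, and the incompressibility of $u$ is exactly what is required to remove the spurious trace contribution.
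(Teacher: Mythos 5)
Your proof is correct and follows essentially the same route as the paper: parity kills the $\rho$- and $\th$-contributions, the computation reduces to the isotropic fourth-order tensor $\la\tilde A_{ij}A_{kl}\ra$, and incompressibility removes the trace term. The only difference is that you derive the tensor identity and its normalization $\mu=\nu=\tfrac1{10}\la\tilde A:\cL\tilde A\ra$ from scratch by the isotropy/contraction argument, whereas the paper cites Lemma 4.4 of [BGL2] (and Lemma 4.3 of [GoB]); note also that your coefficient $-\tfrac23\de_{kl}\de_{ij}$ in the projector is the correct one, consistent with the paper's final display (the $-\tfrac13$ in the paper's intermediate formula is a typo).
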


\begin{proof}
By Proposition \ref{rhout}, one has
$$
\la\tilde A(w)w\cdot\grad_xg\ra=\la\tilde A(w)\otimes A(w)\ra:\grad_xu
$$
since the tensor field $w\mapsto A(w)w$ is odd. By Lemma 4.4 in \cite{BGL2} (see formula (4.13a)), one has
$$
\la\tilde A_{ij}A_{kl}\ra=\nu(\de_{ik}\de_{jl}+\de_{il}\de_{jk}-\tfrac13\de_{ij}\de_{kl}),
$$
with 
$$
\nu:=\tfrac1{10}\la\tilde A:\cL\tilde A\ra>0
$$
(see formula (4.10) in \cite{BGL2}). Formulas (4.10)-(4.13a) in \cite{BGL2} are based on elementary symmetry arguments --- most notably the fact that $A(Rw)=RA(w)R^T$ for each $R\in O_3(\bR)$. Complete proofs of these formulas can be found 
in Lemma 4.3 of \cite{GoB}. Hence
$$
\la\tilde A(w)w\cdot\grad_xg\ra=\nu(\grad_xu+(\grad_xu)^T-\tfrac23(\Div_xu)I)\,.
$$
Since the velocity field $u$ is divergence-free by Proposition \ref{bouss}, this concludes the proof.
\end{proof}

\subsubsection{Step 6: Convection term.}


\begin{proposition}\label{conv}
Under the assumptions of Theorem \ref{T-LimThm},
$$ 
\la\tilde A(w)\cQ(g)\ra=A(u)
$$
where $\tilde A$ is defined in (\ref{defAtilde}), while $\cQ$ is defined in (\ref{defQ}).
\end{proposition}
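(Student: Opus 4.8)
The plan is to reduce the computation of $\la\tilde A\cQ(g)\ra$ to a single Gaussian moment, by exploiting two facts already at our disposal: that $g\in\Ker\cL$ by Proposition~\ref{rhout} and (\ref{null}), and that $\cL$ is self-adjoint with $\cL\tilde A=A$ by Theorem~\ref{T-HilbGr} and (\ref{defAtilde}). The crux is the pointwise identity
$$
\cQ(g)=\tfrac12\cL(g^2)\qquad\hbox{ whenever }g\in\Ker\cL\,.
$$
To establish it, I would first rewrite both operators in symmetric form. From (\ref{defQ}) and the elastic weight cancellation $M(w')M(w'_*)=M(w)M(w_*)$ one has $\cQ(g)(w)=\iint M(w_*)(g(w')g(w'_*)-g(w)g(w_*))c\,\dd w_*\dd\om$, while the same manipulation applied to (\ref{defL}) gives $\cL\phi(w)=-\iint M(w_*)(\phi(w')+\phi(w'_*)-\phi(w)-\phi(w_*))c\,\dd w_*\dd\om$. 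Since $g$ is a collision invariant, $g(w')+g(w'_*)=g(w)+g(w_*)$; squaring this relation yields $(g')^2+(g'_*)^2-g^2-g_*^2=2(gg_*-g'g'_*)$, and substituting into the formula for $\cL(g^2)$ produces exactly $2\cQ(g)$.

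Next I would invoke self-adjointness. Because $g=\rho+u\cdot w+\tfrac12\th(|w|^2-3)$ is a polynomial of degree $2$, the function $g^2$ has degree $4$ and therefore lies in $\Dom\cL=L^2((\bar c\star M)^2M\,\dd v)$, thanks to the growth bound $\bar c\star M\le C(1+|w|)^\g$ inherited from (\ref{cc4}); this is the technical point that legitimizes the pairing. Then, using $\cL=\cL^*$ and $\cL\tilde A=A$,
$$
\la\tilde A\cQ(g)\ra=\tfrac12\la\tilde A\,\cL(g^2)\ra=\tfrac12\la(\cL\tilde A)\,g^2\ra=\tfrac12\la A\,g^2\ra\,.
$$

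Finally I would evaluate $\la A\,g^2\ra$ directly. Expanding $g^2$, every contribution is discarded save that of $(u\cdot w)^2$: the terms odd in $w$ vanish against the even tensor $A$, and the purely radial even terms vanish because $A$ is trace-free and quadratic, hence has zero angular average. The surviving term is computed from the fourth-order Gaussian moments $\la w_iw_jw_kw_l\ra=\de_{ij}\de_{kl}+\de_{ik}\de_{jl}+\de_{il}\de_{jk}$, giving $\int A(w)(u\cdot w)^2M(w)\,\dd w=2(u\otimes u-\tfrac13|u|^2 I)=2A(u)$, whence $\la A\,g^2\ra=2A(u)$ and $\la\tilde A\cQ(g)\ra=A(u)$, as claimed. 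I expect the main obstacle to be the first step, the identity $\cQ(g)=\tfrac12\cL(g^2)$, which rests on the collision-invariance relation $g'+g'_*=g+g_*$ together with the elastic cancellation $M'M'_*=MM_*$; once this is in hand, the self-adjoint pairing and the moment computation are routine.
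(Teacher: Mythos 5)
Your proposal is correct and follows essentially the same route as the paper: the identity $\cQ(g)=\tfrac12\cL(g^2)$ for $g\in\Ker\cL$ (which the paper cites from formula (60) of \cite{BGL1} and you rederive from the collision-invariance relation $g'+g'_*=g+g_*$), followed by the self-adjoint pairing $\la\tilde A\cQ(g)\ra=\tfrac12\la Ag^2\ra$ and the evaluation of the fourth-order Gaussian moments (which the paper cites from Lemma 4.2 of \cite{GoB} and you compute directly). The argument and the intermediate quantities match the paper's proof.
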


\begin{proof}
By Proposition \ref{rhout}, $g(t,x,\cdot)\in \Ker\cL$ for a.e. $(t,x)\in\bR_+^*\times\bR^3$. According to formula (60) in \cite{BGL1}, one has
$$
\cQ(g(t,x,\cdot))=\tfrac12\cL(g(t,x,\cdot)^2)\,,\quad\hbox{ for a.e. }(t,x)\in\bR_+^*\times\bR^3\,.
$$
Since $\cL$ is self-adjoint on $L^2(M\,\dd w)$ by Theorem \ref{T-HilbGr} and $g^2\in\Dom\cL$, one has
$$
\la\tilde A(w)\cQ(g)\ra=\la\tilde A(w)\tfrac12\cL(g^2)\ra=\tfrac12\la(\cL\tilde A)g^2\ra=\tfrac12\la Ag^2\ra\,.
$$
Eliminating the odd component of $g^2$ since $w\mapsto A(w)$ is even, one finds that
$$
\la Ag^2\ra=\la A\otimes w\otimes w\ra:(u\otimes u)+\La A\left(\rho+\th\tfrac12(|w|^2-3)\right)^2\Ra\,.
$$

First 
$$
\La A\left(\rho+\th\tfrac12(|w|^2-3)\right)^2\Ra=\tfrac13\La\Tr(A)\left(\rho+\th\tfrac12(|w|^2-3)\right)^2\Ra I=0
$$
because $A(Rw)=RA(w)A^T$ and $\Tr(A)=0$ --- see Lemma 4.2 in \cite{GoB} for a detailed proof. 

Then
$$
\la A\otimes w\otimes w\ra_{ijkl}=\la A_{ij}A_{kl}\ra=\de_{ik}\de_{jl}+\de_{il}\de_{jk}-\tfrac23\de_{ij}\de_{kl}\,,
$$
Lemma 4.2 in \cite{GoB}, so that
$$
\la A\otimes w\otimes w\ra:(u\otimes u)=2u\otimes u-\tfrac23|u|^2I\,.
$$
This concludes the proof.
\end{proof}

\medskip

\subsubsection{Step 7: Asymptotic friction flux.}


\begin{proposition}\label{last}
Under the assumptions of Theorem \ref{T-LimThm},
$$
\int\tilde A(w)\cR(f_n,F_n)(w)dw\to 0\quad\hbox{ in }\cD'(\bR_+^*\times\bR^3)\,.
$$
\end{proposition}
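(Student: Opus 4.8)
The plan is to recast the friction flux in the same ``difference'' form already exploited in the proof of Proposition~\ref{rhout}, to peel off the fluctuation $g_n$, and then to recognize the limit as a rotationally invariant traceless matrix, which is forced to vanish. First I would rewrite the integral starting from the expression (\ref{newr}) of $\cR$. Renaming the loss variables $(v,w)\mapsto(V,W)$ and using that the loss cross-section satisfies $|\eps_n V-W|\Si_{pg}(|\eps_n V-W|)=q(|\eps_n V-W|)$ (the mass-conservation identity built into Assumption~(H1)), one may reintroduce an integration in $w$ in the loss term. This produces the matrix-valued identity
\be\lb{plan-rew}
\int_{\bR^3}\tilde A(w)\cR(f_n,F_n)(w)\,\dd w=\iiint_{\bR^3\times\bR^3\times\bR^3}(\tilde A(w)-\tilde A(W))F_n(V)f_n(W)\Pi^{\eps_n,\eta_n}_{gp}(w,\dd V\dd W)\,\dd w\,,
\ee
the exact analogue of the scalar computation in Proposition~\ref{rhout}. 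Inserting $f_n=M(1+\eps_ng_n)$ splits the right-hand side of (\ref{plan-rew}) into $T_n+\eps_nR_n$, where $T_n$ carries the factor $M(W)$ and $R_n$ the factor $M(W)g_n(W)$.

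I would next dispose of the fluctuation contribution $\eps_nR_n$. Since $\a\in L^\infty$, one has $|\tilde A(w)-\tilde A(W)|\le C(1+|w|^2)(1+|W|^2)$, and Assumption~(b) gives $F_n(V)\le C_R(1+|V|^2)^{-p}$ on compact sets in $(t,x)$. Assumption~(H5), applied with $h=g_n$, therefore bounds $|R_n|$ by $C\,C_R\|g_n\|_{L^2(M\,\dd w)}$, which is bounded in $L^2_{loc}(\bR_+^*\times\bR^3)$ by Assumption~(c). Multiplying by $\eps_n\to 0$ then yields $\eps_nR_n\to 0$ in $L^2_{loc}$, hence in $\cD'$.

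The core of the argument is the limit of $T_n=\int_{\bR^3}F_n(V)G_n(V)\,\dd V$, where
\be\lb{plan-G}
G_n(V):=\iint_{\bR^3\times\bR^3}(\tilde A(w)-\tilde A(W))M(W)\Pi^{\eps_n,\eta_n}_{gp}(w,\dd V\dd W)\,\dd w\,.
\ee
The integrand $\Phi(w,W)=(\tilde A(w)-\tilde A(W))M(W)$ obeys precisely the bound $|\Phi(w,W)|\le C(1+|w|^2+|W|^2)M(W)$ required in Assumption~(H4), so $G_n\to G_0$ in $L^1((1+|V|^2)^{-p}\,\dd V)$, where $G_0$ is given by (\ref{plan-G}) with $\Pi^{0,0}_{gp}$ replacing $\Pi^{\eps_n,\eta_n}_{gp}$. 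I expect the main obstacle to be combining this convergence with the weak-$*$ convergence $F_n\wto F$: testing against $\chi\in C_c(\bR_+^*\times\bR^3)$ I would write
$$
\int\chi T_n=\iiint\chi F_n(G_n-G_0)+\iiint\chi F_nG_0\,.
$$
The first term is dominated by $C_R\|\chi\|_{L^\infty}\int_{\bR^3}(1+|V|^2)^{-p}|G_n-G_0|\,\dd V\to 0$ by (H4); the second converges to $\iiint\chi FG_0$ by the weak-$*$ convergence of $F_n$, after truncating $G_0$ in $V$ and using the integrability $G_0\in L^1((1+|V|^2)^{-3}\,\dd V)$ provided by (H4) together with $p>3$ to control the tail uniformly in $n$. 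Thus $T_n\to T:=\int_{\bR^3}F(V)G_0(V)\,\dd V$ in $\cD'$.

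It remains to prove $T=0$, which is where I would use the invariance $\cT_R\#\Pi^{0,0}_{gp}=\Pi^{0,0}_{gp}$ from (H4). Applying it with the test function $(w,V,W)\mapsto(\tilde A(w)-\tilde A(W))M(W)F(V)$, and using $M(RW)=M(W)$ together with $\tilde A(Rw)=R\tilde A(w)R^T$ (a consequence of $\tilde A=\a(|\cdot|)A$ and $A(Rw)=RA(w)R^T$), one finds $T=RTR^T$ for every $R\in O_3(\bR)$. A symmetric matrix commuting with all rotations in this way is a scalar multiple of the identity; but $\Tr\tilde A=0$ forces $\Tr T=0$, so that scalar must vanish and $T=0$. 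The delicate point is the passage to the limit in $T_n$, reconciling the weak-$*$ convergence of $F_n$ with the $(\eps_n,\eta_n)$-convergence of the scattering kernels encoded in (H4); by contrast the final symmetry argument is elementary.
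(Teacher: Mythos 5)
Your proposal is correct and follows essentially the same route as the paper: the same rewriting of the flux via (H1), the same splitting $f_n=M(1+\eps_n g_n)$ with the fluctuation part killed by (H5), assumptions (b)--(c) and the factor $\eps_n$, the same two-step limit for the leading part (kernel convergence from (H4) plus weak-$*$ convergence of $F_n$ with the $(1+|V|^2)^{-p}$ tail control), and the same concluding $O_3(\bR)$-invariance and trace-free argument. No gaps to report.
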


\begin{proof}
First, we deduce from (H1) that
$$ 
\ba
\int_{\bR^3}\tilde A(w)\cR(M,F_n)(w)\,\dd w&
\\
=\iiint_{\bR^3\times\bR^3\times\bR^3}F_n(V)M(W)(\tilde A(w)-\tilde A(W))\Pi^{\eps_n,\eta_n}_{gp}(w,\dd V\dd W)\,\dd w&\,.
\ea
$$
Then
$$ 
\ba
\left|\int_{\bR^3}\left(\tilde A\cR(M,F_n)\!-\!\iint_{\bR^3\times\bR^3}F(V)M(W)(\tilde A(w)\!-\!\tilde A(W))\Pi^{0,0}_{gp}(w,\dd V\dd W)\right)\,\dd w\right|
\\
\le\left|\iiint_{\bR^3\times\bR^3\times\bR^3}F_n(V)M(W)(\tilde A(w)-\tilde A(W))(\Pi^{\eps_n,\eta_n}_{gp}-\Pi^{0,0}_{gp})(w,\dd V\dd W)\,\dd w\right|&
\\
+\left|\iiint_{\bR^3\times\bR^3\times\bR^3}(F_n(V) - F(V))M(W)(\tilde A(w)-\tilde A(W))\Pi^{0,0}_{gp}(w,\dd V\dd W)\,\dd w\right|&\,.
\ea
$$
The first term on the right hand side vanishes in the sense of distributions on $\bR_+^*\times\bR^3$ because of the second part of assumption (H4) and the fact that the radial function $\a$ in (\ref{defalpha}) belongs to $L^\infty(\bR_+)$. 
The second term on the right hand side also vanishes in the sense of distributions on $\bR_+^*\times\bR^3$ because of the last part of assumption (H4).

According to the first part of assumption (H4),
$$
\ba
\iiint_{\bR^3\times\bR^3\times\bR^3}F(V)M(W)(\tilde A(w)-\tilde A(W))\Pi^{0,0}_{gp}(w,\dd V\dd W)\,\dd w&
\\
=\iiint_{\bR^3\times\bR^3\times\bR^3}F(V)M(W)(\tilde A(w)-\tilde A(W))\cT_R\#\Pi^{0,0}_{gp}(w,\dd V\dd W)\,\dd w&
\\
=\iiint_{\bR^3\times\bR^3\times\bR^3}F(V)M(W)(\tilde A(Rw)-\tilde A(RW))\Pi^{0,0}_{gp}(w,\dd V\dd W)\,\dd w&
\ea
$$
for each $R\in O_3(\bR)$, where $\cT_R$ is defined in (\ref{defTR}). Because of (\ref{defalpha}),
$$
\tilde A(Rw)=R \tilde A(w)R^T\,,\quad\hbox{ for each }R\in O_3(\bR).
$$
Thus, for each $R\in O_3(\bR)$,
$$
\cA:=\iiint_{\bR^3\times\bR^3\times\bR^3}F(V)M(W)(\tilde A(Rw)-\tilde A(RW))\Pi^{0,0}_{gp}(w,\dd V\dd W)\,\dd w=R\cA R^T
$$
a.e. on $\bR_+^*\times\bR^3$. At this point, we use the following classical lemma.

\begin{lemma} Let $\cM=\cM^T\in M_3(\bR)$ satisfy 
$$
R\cM=\cM R\hbox{ for each }R\in O_3(\bR)\,.
$$
Then $\cM$ is of the form 
$$
\cM=\l I\,,\quad\hbox{ with }\l=\tfrac13\Tr\cM\,.
$$
\end{lemma}

(The proof of this lemma is an easy exercise in linear algebra; alternately, it is a special case of Lemma 4.1 in \cite{GoB} for $m=2$ and in the case of a constant tensor field, i.e. $T(\xi)\equiv T(0)$.)

As a consequence,
$$
\cA(t,x)=\tfrac13\Tr(\cA(t,x))I=0\,,
$$
since
$$
\Tr\cA=\iiint_{\bR^3\times\bR^3\times\bR^3} F(V)M(W)\Tr(\tilde A(w)-\tilde A(W))\Pi^{0,0}_{gp}(dwdVdW)=0 .
$$
Hence
\be\lb{FricFlux1}
\int_{\bR^3}\tilde A(w)\cR(M,F_n)(w)\,\dd w\to 0\quad\hbox{ in }\cD(\bR_+^* \times \bR^3)\,.
\ee

Next, we deduce from (H1) that
\be\lb{FricFlux2}
\ba
\left|\int_{\bR^3}\cR(Mg_n, F_n)\tilde A(w)\,\dd w\right|
\\
=\left|\iiint_{\bR^3 \times\bR^3\times\bR^3}(\tilde A(w)-\tilde A(W))M(W)g_n(W)F_n(V)\Pi_{gp}(w,\dd V\dd W)\,\dd w\right|
\\
\le C_K\iiint_{\bR^3\times\bR^3\times\bR^3}(|w|^2+|W|^2)M(W)|g_n(W)|(1+|V|^2)^{-p}\Pi_{gp}(w,\dd V\dd W)\,\dd w
\\
\le CC_K\|g\|_{L^2(M\,\dd w)}
\ea
\ee
for all $(t,x)\in[0,K]\times[-K,K]^3$, by (H5) and assumptions (c) in Theorem \ref{T-LimThm}.

The conclusion follows from (\ref{FricFlux1})-(\ref{FricFlux2}), from assumption (c) in Theorem \ref{T-LimThm} showing the last right hand side of (\ref{FricFlux2}) is bounded in $L^2_{loc}(\bR_+^*\times\bR^3)$, and from the identity 
$$
\ba
\int_{\bR^3}\tilde A(w)\cR(f_n,F_n)(w)\,\dd w=&\int_{\bR^3}\tilde A(w)\cR(M,F_n)(w)\,\dd w
\\
&+\eps_n\int_{\bR^3}\tilde A(w)\cR(Mg_n,F_n)(w)\,\dd w\,.
\ea
$$
\end{proof}

\subsubsection{Step 8: End of the proof of Theorem \ref{T-LimThm}.}


First, we recall that $\cL$ is self-adjoint in $L^2(M\,\dd w)$ according to Theorem \ref{T-HilbGr}. Hence
$$
\frac1{\eps_n}\la A(w)g_n\ra=\frac1{\eps_n}\la(\cL\tilde A)(w)g_n\ra=\La\tilde A(w)\frac1{\eps_n}\cL g_n\Ra\,.
$$
Following the same procedure as in \cite{BGL1}, we use the Boltzmann equation for $g_n$ in the form (\ref{eqgpr}) to express the term $\frac1{\eps_n}\cL g_n$:
\be\lb{MomFlux}
\ba
\frac1{\eps_n}\la A(w)g_n\ra=&\la\tilde A(w)\cQ(g_n)\ra-\la\tilde A(w)(\eps_n\d_t+w\cdot\grad_x)g_n\ra
\\
&+\la\tilde A(w)M^{-1}\cR(f_n,F_n)\ra .
\ea
\ee

We first pass to the limit in the term $\la\tilde A(w)(\eps_n\d_t+w\cdot\grad_x)g_n\ra$ in the sense of distributions on $\bR_+^*\times\bR^3$. Since the function $\a\in(\ref{defalpha})$ is bounded, one has
$$
\int_{\bR^3}(1+|w|)^2|\tilde A(w)|^2M(w)\,\dd w<\infty\,.
$$
By assumption (c) in Theorem \ref{T-LimThm} and the Cauchy-Schwarz inequality, 
$$
\la\tilde Ag_n\ra\to\la\tilde Ag\ra\hbox{ and }\la w\tilde Ag_n\ra\to\la w\tilde Ag\ra\hbox{ in }L^2_{loc}(\bR_+^*\times\bR^3)\hbox{ weak.}
$$
Hence
$$
\la\tilde A(w)(\eps_n\d_t+w\cdot\grad_x)g_n\ra=\eps_n\d_t\la\tilde Ag_n\ra+\Div_x\la w\tilde Ag_n\ra\to\Div_x\la w\tilde Ag\ra
$$
in $\cD'(\bR_+^*\times\bR^3)$. By Proposition \ref{visc}, 
\be\lb{LimVisc}
\la\tilde A(w)(\eps_n\d_t+w\cdot\grad_x)g_n\ra\to\nu(\grad _xu+(\grad_xu)^T)\hbox{ in }\cD'(\bR_+^*\times\bR^3)\,.
\ee

Next we use the identity
$$ 
\la\tilde A\cQ(g_n)\ra=\iint_{\bR^3\times\bR^3}P(w,w_*)M(w_*)g_n(w_*)M(w)g_n(w)\dd w\dd w_*
$$
where
$$ 
P(w,w_*):=\int_{\bS^2}(\tilde A(w')-\tilde A(w))c(w-w_*,\om)\,\dd\om\,.
$$
Obviously
$$
\la\tilde A\cQ(g_n)\ra=\int_{\bR^3}h_n(t,x,w)M(w)g_n(w)\dd w
$$
with
$$
h_n(t,x,w):=\int_{\bR^3}P(w,w_*)M(w_*)g_n(t,x,w_*)\,\dd w_*\,.
$$

One has
$$
|P(w,w_*)|\le C(1+|w|^3+|w_*|^3)
$$
because of the growth assumption (\ref{cc4}) on the collision kernel, and the assumption that the function $\a$ in (\ref{defalpha}) is bounded on $\bR_+^*$. Assumption (c) in Theorem \ref{T-LimThm} implies that
$$
\sup_{n\ge 1}\iiint_{[0,R]\times[-R,R]^3\times\bR^3}M(w_*)g_n(t,x,w_*)^2\,\dd w_*\dd x\dd t<\infty
$$
so that, by the Cauchy-Schwarz inequality,
$$
\int_{|w_*|>R}|P(w,w_*)||g_n(t,x,w_*)|M(w_*)\,\dd w_*\to 0\hbox{ in }L^2_{loc}(\bR_+^*\times\bR^3\times\bR^3)
$$
uniformly in $n\ge 1$ as $R\to\infty$. Therefore, we deduce from assumption (d) in Theorem \ref{T-LimThm} that
$$
\ba
h_n(t,x,w)\to\int_{\bR^3}P(w,w_*)M(w_*)g(t,x,w_*)\,\dd w_*=:h(t,x,w)
\ea
$$
in $L^2_{loc}(\bR_+\times\bR^3\times\bR^3)$. In particular, by weak-strong continuity of the pointwise product, one has
$$
\int_{|w|\le K}h_n(t,x,w)M(w)g_n(t,x,w)\dd w\to\int_{|w|\le K}h(t,x,w)M(w)g(t,x,w)\dd w
$$
in $\cD'(\bR_+\times\bR^3)$ for all $K>0$. On the other hand
$$
M(w)h_n(t,x,w)^2\le C(1+|w|^3)^2M(w)\int_{\bR^3}M(w_*)g_n(t,x,w_*)^2\,\dd w_*\,,
$$
so that, by the Cauchy-Schwarz inequality,
$$
\ba
\int_{|w|>K}h_n(t,x,w)M(w)g_n(t,x,w)\dd w
\\
\le\sqrt{C}\left(\int_{|w|>K}(1+|w|^3)^2M(w)\,\dd w\right)^{1/2}\int_{\bR^3}M(\xi)g_n(t,x,\xi)^2\,\dd \xi\to 0
\ea
$$
in $L^2_{loc}(\bR_+^*\times\bR^3)$ as $K\to+\infty$ uniformly in $n\ge 1$, according to assumption (c) in Theorem \ref{T-LimThm}. Hence
\be\lb{LimConv}
\ba
\la\tilde A\cQ(g_n)\ra(t,x)=&\int_{\bR^3}h_n(t,x,w)M(w)g_n(t,x,w)\dd w
\\
&\to\int_{\bR^3}h(t,x,w)M(w)g(t,x,w)\dd w=\la\tilde A\cQ(g)\ra(t,x)=A(u)(t,x)
\ea
\ee
in $\cD'(\bR_+\times\bR^3)$, where the last equality follows from Proposition \ref{conv}.

Since the last term on the right hand side of (\ref{MomFlux}) vanishes by Proposition \ref{last}, we conclude that
$$
\frac1{\eps_n}\la A(w)g_n\ra\to A(u)-\nu\left((\grad_xu)+(\grad_xu)^T\right)\quad\hbox{ in }\cD'(\bR_+\times\bR^3)\,.
$$
In particular,
$$
\ba
\Div_x\frac1{\eps_n}\la A(w)g_n\ra\to\Div_x(u\otimes u)-\tfrac13\grad_x|u|^2-\nu\Dlt_xu-\nu\grad_x\Div_xu
\\
=\Div_x(u\otimes u)-\nu\Dlt_xu-\tfrac13\grad_x|u|^2
\ea
$$
in $\cD'(\bR_+^*\times\bR^3)$, by the divergence-free condition in Proposition \ref{bouss}. Hence, for each divergence-free, compactly supported, smooth vector field $\xi\equiv\xi(x)\in\bR^3$,
$$
\ba
\int_{\bR^3}\frac1{\eps_n}\la w\otimes wg_n\ra(t,x):\grad\xi(x)\,\dd x=\int_{\bR^3}\frac1{\eps_n}\la A(w)g_n\ra(t,x):\grad\xi(x)\,\dd x
\\
\to\int_{\bR^3}(u\otimes u-\nu\grad_xu)(t,x):\grad\xi(x)\,\dd x
\ea
$$ 
in $\cD'(\bR_+^*)$.
\medskip

We recall that the momentum balance law for the Boltzmann equation for gas molecules is
\be\label{newlin}
\d_t\la wg_n\ra+\frac1{\eps_n}\Div_x\la w^{\otimes 2}g_n\ra=\frac1{\eps_n}\la wM^{-1}\cR(f_n,F_n)\ra\,.
\ee
By Proposition \ref{rhout}, 
$$
\la wg_n\ra\to\la wg\ra=u\quad\hbox{ in }L^2(\bR_+^*\times\bR^3)\hbox{ weak,}
$$
while
$$
\frac1{\eps_n}\la wM^{-1}\cR(f_n,F_n)\ra\to\ka\int(v-u)Fdv\quad\hbox{ in }\cD'(\bR_+^*\times\bR^3)\,.
$$

Thus, for each divergence-free, compactly supported, smooth vector field $\xi\equiv\xi(x)\in\bR^3$, passing to the limit in the weak formulation (in $x$) of the momentum balance law (\ref{newlin}), i.e.
$$
\ba
\d_t\int_{\bR^3}\xi(x)\cdot\la wg_n\ra(t,x)\,\dd x-&\frac1{\eps_n}\int_{\bR^3}\la A(w)g_n\ra(t,x):\grad\xi(x)\,\dd x
\\
&=\frac1{\eps_n}\int_{\bR^3}\xi(x)\cdot\la wM^{-1}\cR(f_n,F_n)\ra(t,x)\,\dd x\,,
\ea
$$
results in the equality
$$
\ba
\d_t\int_{\bR^3}u(t,x)\cdot\xi(x)\,\dd x=&\int_{\bR^3}(u\otimes u-\nu\grad_xu)(t,x):\grad\xi(x)\,\dd x
\\
&+\ka\iint_{\bR^3\times\bR^3}\xi(x)\cdot(v-u(t,x))F(t,x,v)\,\dd v\dd x\,.
\ea
$$
By de Rham's characterization of currents homologous to $0$ (see Thm. 17' in \cite{deRham}), there exists $p\in\cD'(\bR_+^*\!\times\!\bR^3)$ such that
$$
\d_tu+\Div_x(u\otimes u-\nu\grad_xu)-\ka\int_{\bR^3}(v-u)F\,\dd v=-\grad_xp\,.
$$

Finally, we recall the equation for the distribution function of the dispersed phase:
$$
\d_tF_n+v\cdot\grad_xF_n=\frac1{\eta_n}\cD(F_n,f_n)\,.
$$
The assumptions on the convergence of $F_n$ in Theorem \ref{T-LimThm} imply that
$$
\d_tF_n+v\cdot\grad_xF_n\to\d_tF+v\cdot\grad_xF\quad\hbox{ in }\cD'(\bR_+^*\times\bR^3\times\bR^3)\,.
$$
Applying Proposition \ref{defl} shows that
$$
\d_tF+v\cdot\grad_xF=\ka\Div_v((v-u)F).
$$
and this concludes the proof of Theorem \ref{T-LimThm}.


\section{Conclusions and perspectives}


We conclude this paper with a few remarks on the method presented here, and on the assumptions used in Theorem \ref{T-LimThm}.

Let us first discuss the class of collision interactions considered in this work. 

We have assumed that intermolecular collisions correspond to cut-off hard potentials, which is natural. However, our assumption that the radial function $\alpha$ in (\ref{defalpha}) is bounded could be a significant restriction to the class
of intermolecular potentials considered. At present, this assumption is known to be satisfied only in the case of cut-off Maxwell molecules, when $\a$ is a constant. It would be natural to expect that the growth of $\a$ at infinity is such that
$$
\a(|w|)\sim(\bar c\star M(w))^{-1}\quad\hbox{ as }|w|\to\infty\,,
$$
however, we are not aware of any result  of this type in the existing literature, and we have not been able to prove it, even in the simplest case of hard sphere collisions. Perhaps the assumption that $\a$ is bounded can be relaxed at the
expense of more technical proofs.

Likewise, we have considered in this paper only the case of a monatomic propellant; however, this assumption could certainly be relaxed and more realistic models of propellant could be handled with the same methods.

Concerning collisions between gas molecules and dust particles/droplets, the hard spheres model for the collision cross-section may be the best choice when the detail of the interaction is not known, because the dust particles/droplets, 
though tiny, are macroscopic objects if compared to gas molecules. Hard spheres clearly belong to the class of cross-sections included in the assumptions of our theorem.

Otherwise, it would be more realistic to include polydispersion in our model of aerosol/spray --- i.e. to assume that the dust particles/droplets are distributed in size, and to include aggregation and fragmentation effects in the equation
for the distribution function of the dispersed phase. Such a generalization of the model considered in this work would be extremely natural, although significantly more technical, and we have avoided these effects in the present paper
for the sake of simplicity.

Finally, a few remarks on the class of solutions considered in this work are in order.

We have not tried to optimize the assumptions bearing on the solutions to the coupled Boltzmann system. Assuming a uniform control $g^2_n M$ is fairly natural, since quantity appears naturally in the entropy estimate. See section 3
in \cite{BGL2}, and Proposition 2.3 in \cite{GSR2} for a detailed discussion of this point.

As for $F_n$, we have chosen a $L^\infty$ setting since, in the limit, $F$ will satisfy a Vlasov equation which propagates $L^\infty$ estimates over finite time intervals. Finally, only the averages with respect to $w$ of $g_n$ are required 
to converge strongly in $L^2_{loc}$ and a.e., as in \cite{BGL1}. No such assumption is required on the averages of $F_n$ since no quadratic term in $F$ appears in the limit. 

Finally, the mathematical value of a formal limit theorem such as Theorem \ref{T-LimThm} can be questioned. However, it can be argued that the moment approach in the formal limit theorem \cite{BGL1} is the basis of the rigorous proofs
of the hydrodynamic limit of the Boltzmann equation leading to the incompressible Navier-Stokes equation in \cite{GSR1,GSR2}. Perhaps the hydrodynamic limit for the propellant is the most difficult part in Theorem \ref{T-LimThm}, 
and a rigorous derivation of the formal limit discussed here can be obtained along the lines of \cite{GSR1,GSR2}. That the discussion specific to the interaction of the propellant with the dispersed phase, i.e.  steps 2,3 and 7 in the proof of 
Theorem \ref{T-LimThm}, can be isolated from the Navier-Stokes limit for the propellant suggests that this derivation could be made rigorous in the not too distant future.


{\bf{Acknowledgment}}: The research leading to this paper was funded by
the French ``ANR blanche'' project Kibord: ANR-13-BS01-0004, and by Universit\'e Sorbonne Paris Cit\'e, in the framework of the ``Investissements d'Avenir'', convention ANR-11-IDEX-0005. V.Ricci acknowledges the support by the 
GNFM (research project 2015: ``Studio asintotico rigoroso di sistemi a una o pi\`u componenti'').




\end{document}